\let\eps=\varepsilon
\let\wt=\widetilde
\def\cC{{\mathcal C}}
\def\cF{{\mathcal F}}
\def\cL{{\mathcal L}}
\def\cP{{\mathcal P}}
\def\cQ{{\mathcal Q}}
\def\cS{{\mathcal S}}
\def\N{{\mathbb N}}
\def\R{{\mathbb R}}
\def\Z{{\mathbb Z}}
\def\Supp{\, \mbox{Supp}\,  }
\def\virgp{\raise 2pt\hbox{,}}
\def\cdotpv{\raise 2pt\hbox{;}}
\def\div{ \hbox{\rm div}\,  }
\def\ddj{\dot \Delta_j}
\def\ddk{\dot \Delta_k}
\newtheorem{defi}{Definition}[section]
\newtheorem{thm}{Theorem}[section]
\newtheorem{lem}{Lemma}[section]
\newtheorem{rmk}{Remark}[section]
\newtheorem{cor}{Corollary}[section]
\newcommand{\ben}{\begin{eqnarray}}
\newcommand{\een}{\end{eqnarray}}
\newcommand{\beno}{\begin{eqnarray*}}
\newcommand{\eeno}{\end{eqnarray*}}
\begin{document}
\title[]{Compressible Navier-Stokes system~: large solutions and incompressible limit}
\author[R. Danchin]{Rapha\"{e}l Danchin}
\address[R. Danchin]{Universit\'{e} Paris-Est,  LAMA (UMR 8050), UPEMLV, UPEC, CNRS, Institut Universitaire de France,
 61 avenue du G\'{e}n\'{e}ral de Gaulle, 94010 Cr\'{e}teil Cedex, France.} \email{raphael.danchin@u-pec.fr}
\author[P.B. Mucha]{Piotr Bogus\l aw Mucha}
\address[P.B. Mucha]{Instytut Matematyki Stosowanej i Mechaniki,
 Uniwersytet Wars\-zawski, 
ul. Banacha 2,  02-097 Warszawa, Poland.} 
\email{p.mucha@mimuw.edu.pl}

\begin{abstract} 
Here we prove the existence of global in time regular solutions to the two-dimensional compressible Navier-Stokes equations 
supplemented with arbitrary large initial velocity $v_0$ and almost  constant
density $\varrho_0$, for  large volume (bulk) viscosity. The result is generalized to the higher 
dimensional case under the additional assumption that the strong solution of the classical 
incompressible Navier-Stokes equations supplemented with the divergence-free
projection of $v_0,$ is global.  The systems are examined in $\R^d$
with $d \geq 2$,  in the critical $\dot B^s_{2,1}$ Besov spaces  framework.
\end{abstract}
\maketitle

\section{Introduction}

We are concerned with  the following compressible Navier-Stokes equations in the whole space $\R^d$:
\begin{equation}\label{eq:NSC}
\left\{ \begin{array}{l}
  \varrho_t +\div (\varrho v) =0,\\[5pt]
\varrho v_t + \varrho v \cdot \nabla v - \mu \Delta v - (\lambda + \mu) \nabla \div v + \nabla P =0,
 \end{array}\right.
\end{equation}
supplemented with  initial data: $\varrho|_{t=0} =\varrho_0$ and $v|_{t=0}=v_0$.
\medbreak
The  pressure function $P$ is given and assumed to be strictly increasing. The shear and volume viscosity coefficients
$\lambda$ and $\mu$ are  constant (just for simplicity) and fulfill the standard  strong parabolicity assumption:
\begin{equation}\label{eq:viscous}
\mu>0\quad\hbox{and}\quad
\nu:=\lambda+2\mu>0.
\end{equation}


Starting with the pioneering work by  Matsumura and Nishida \cite{MN80,MN83} in the
beginning of the eighties,  a number of papers have been dedicated to the challenging issue
of proving the global existence of strong solutions for \eqref{eq:NSC} 
in different contexts (whole space or domains, dimension $d=2$ or $d\geq3$, and so on). 
One may mention in particular the works by Zaj\c aczkowski \cite{VZ}, Shibata \cite{KS}, Danchin \cite{D1}, Mucha \cite{M,MZ02,MZ} and, more  recently,  by Kotschote \cite{Kot1,Kot2}.
The common point between all those papers is that  the initial velocity is  assumed to be small, and 
that the initial density is close to a stable constant steady state. 

Our main goal is to prove the global existence of strong solutions to \eqref{eq:NSC}
for a class of large initial data. 
  In the two-dimensional case, 
we establish that, indeed, for  fixed shear viscosity $\mu$ and any   initial velocity-field $v_0$ (with critical regularity), the solution to
\eqref{eq:NSC} is global if $\lambda$ is sufficiently large, and $\varrho_0$ sufficiently close (in terms of $\lambda$) 
to some positive constant (say $1$ for notational simplicity). This  result will strongly  rely 
on the fact that, at least formally, the limit velocity for $\lambda\to+\infty$ satisfies the incompressible Navier-Stokes equations:
\begin{equation}\label{NS}
\left\{ \begin{array}{lcr}
  V_t + V\cdot \nabla V - \mu \Delta V+\nabla\Pi=0 & \mbox{in}&  \R_+\times\R^d, \\
  \div V=0 & \mbox{in}& \R_+\times\R^d, \\
  V|_{t=0} = V_0 & \mbox{at} & \R^d,
 \end{array}\right.
\end{equation}
with $V_0$ being the Leray-Helmholtz projection of $v_0$ on divergence-free vector-fields.
\medbreak
We are also interested in similar results  in  dimension $d\geq3.$ 
 However, as  in that case the global existence issue of strong solutions for \eqref{NS}  supplemented 
 with general data is open,  we have  to assume first that $V_0$ generates  a global strong solution to   \eqref{NS}, and then
to  analyze the stability of that solution in the setting of the compressible model \eqref{eq:NSC}
with large $\lambda.$
\medbreak

%
%
%

Let us first consider the two-dimensional case, assuming  that  initial data $\varrho_0$ and $v_0$ fulfill 
the critical regularity assumptions of \cite{D1}, namely\footnote{The reader may refer to the next 
section for the definition of homogeneous Besov spaces $\dot B^s_{2,1}(\R^d).$}
$$
a_0:=(\varrho_0-1)\in  \dot B^{0}_{2,1}\cap \dot B^{1}_{2,1}(\R^2)\quad\hbox{and}\quad
v_0\in  \dot B^{0}_{2,1}(\R^2).
$$
Then  the initial data   $V_0$ of \eqref{NS} is in $\dot B^{0}_{2,1}(\R^2).$ Therefore,
in light of  the well-known embedding  $\dot B^0_{2,1}(\R^2) \subset L_2(\R^2), $ we are guaranteed
that  it  generates a unique global solution $V$ 
in the energy  class $$V^{1,0}(\R^2 \times \R_+):= \cC_{b}(\R_+;L_2(\R^2)) \cap L_2(\R_+;\dot H^1(\R^2)),$$
that satisfies  the energy identity:
$$
\|V(t)\|_{L_2}^2+2\mu\int_0^t\|\nabla V\|_{L_2}^2\,d\tau=\|V_0\|_{L_2}^2.
$$
Based on that fact, one may prove that the additional regularity of  $V_0$ is  preserved 
through the time evolution (see Theorem \ref{Th:NS} in the  Appendix), that is
\begin{equation}\label{V-bes}
 V \in \cC_b(\R_+;\dot B^0_{2,1}(\R^2)) \cap L_1(\R_+;\dot B^2_{2,1}(\R^2)).
\end{equation}
Let us now state  our main existence result for \eqref{eq:NSC} in the two-dimensional setting.
\begin{thm}\label{th:2D}
Let $\mu \leq \nu$. 
 Let $v_0 \in \dot B^0_{2,1}(\R^2)$ and $\varrho_0$ such that $a_0:=(\varrho_0-1)\in\dot B^0_{2,1}\cap \dot B^1_{2,1}(\R^2).$
 There exists a large constant $C$ such that for $V_0=\cP v_0$, the divergence-free part of the initial velocity, we set 
 \begin{equation}\label{def:M} M:= C\|V_0\|_{\dot B^0_{2,1}}\exp\Bigl(\frac C{\mu^4}\|V_0\|_{L_2}^4\Bigr)\end{equation}
and if  $\nu$ satisfies
$$
Ce^{CM}\bigl( \|a_0\|_{\dot B^0_{2,1}}+\nu\|a_0\|_{\dot B^1_{2,1}}+\|\cQ u_0\|_{\dot B^0_{2,1}}+M^2
 +\mu^2 \bigr)\leq \sqrt\nu\,\sqrt{\mu},
 $$
 where $\cQ$ stands for the projection operator on potential vector-fields,
then there exists a unique global in time regular solution $(\varrho,v)$ to \eqref{eq:NSC} such that
\begin{equation}\label{eq:reg2D}
\begin{array}{c}
   v \in \cC_b(\R_+;\dot B^0_{2,1}),\qquad v_t,\nabla^2v\in L_1(\R_+;\dot B^0_{2,1}),\\[1ex]
  a:=(\varrho -1) \in\cC(\R_+; \dot B^0_{2,1}\cap \dot B^1_{2,1}) \cap
  L_2(\R_+;\dot B^1_{2,1}).\end{array}\end{equation}
In addition, the following bound is fulfilled by the solution:
$$
\displaylines{
\quad \|\cQ v\|_{L_\infty(\R_+;\dot B^0_{2,1})}+
  \|\cQ v_t, \nu\nabla^2\cQ v\|_{L_1(\R_+;\dot B^0_{2,1})}
    + \|a\|_{L_\infty(\R_+;\dot B^0_{2,1})} +
      \nu\|a\|_{L_\infty(\R_+;\dot B^1_{2,1})}\hfill\cr\hfill
     +\nu^{1/2}\bigl(\|\cP v -V\|_{L_\infty(\R_+;\dot B^0_{2,1})}+
  \|\cP v_t -V_t, \mu\nabla^2 (\cP v -V)\|_{L_1(\R_+;\dot B^0_{2,1})}\bigr) 
            \hfill\cr\hfill
    \leq   Ce^{CM}\bigl(\|a_0\|_{\dot B^0_{2,1}} +\nu \|a_0\|_{\dot B^1_{2,1}}+\|\cQ u_0\|_{\dot B^0_{2,1}} +M^2+\mu^2 \bigr).
        \quad} $$
\end{thm}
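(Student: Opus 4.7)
The overall plan is to build the compressible solution as a perturbation of the global incompressible flow $V$ produced by Theorem \ref{Th:NS}, exploiting the fact that large $\nu$ provides strong damping of the potential part of the velocity and, through the mass equation, of the density fluctuation $a:=\varrho-1$. Accordingly, I would split
$$
v=V+w,\qquad w=\cP w+\cQ w=(\cP v-V)+\cQ v,
$$
and, after dividing the momentum equation by $\varrho=1+a$, derive three coupled equations: a transport equation for $a$ with source $(1+a)\div\cQ v$; a heat equation for $\cQ v$ with diffusivity $\nu$, forced by $\nabla P(1+a)$ and the nonlinear terms produced by the convection, the $a$--dependent corrections of the viscous part, and the interactions with $V$; and a heat equation for $R:=\cP v-V$ with diffusivity $\mu$, whose source gathers all $V$--$w$ interactions together with the $a$--dependent corrections.

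\textbf{Local existence} in the class \eqref{eq:reg2D} is provided by the critical $\dot B^s_{2,1}$ theory developed in \cite{D1}: a Friedrichs regularization combined with the usual paraproduct/transport estimates in $\dot B^0_{2,1}\cap\dot B^1_{2,1}$ yields a unique solution on a maximal interval $[0,T^*)$, with a blow-up criterion controlled by the norms appearing in the statement. The task then becomes to propagate a priori the bound
$$
\cE(t):=\|\cQ v,a\|_{L_\infty(\dot B^0_{2,1})}+\|\cQ v_t,\nu\nabla^2\cQ v\|_{L_1(\dot B^0_{2,1})}+\nu\|a\|_{L_\infty(\dot B^1_{2,1})}+\nu^{1/2}\|R\|_X\le Ce^{CM}\bigl(\dots\bigr)
$$
for $t<T^*$, with $X=L_\infty(\dot B^0_{2,1})$ combined with the maximal regularity space $\{u:u_t,\mu\nabla^2u\in L_1(\dot B^0_{2,1})\}$. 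A continuation argument then yields $T^*=+\infty$.

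\textbf{The three estimates} I would run in parallel are: \emph{(a)} Apply the effective-velocity trick of \cite{D1} to the $(a,\cQ v)$ block in order to decouple the transport and parabolic parts; this converts the hyperbolic--parabolic system into a genuinely parabolic one for $\cQ v$ with coefficient $\nu$, coupled to a transport equation for $a$ with drift $v$. Standard Besov maximal regularity then gives control of $\|\cQ v\|_{L_\infty(\dot B^0_{2,1})}+\nu\|\nabla^2\cQ v\|_{L_1(\dot B^0_{2,1})}$ and, through the transport estimate, of $\|a\|_{L_\infty(\dot B^0_{2,1}\cap\dot B^1_{2,1})}$; the log-factor $e^{CM}$ arises here from the Gronwall step based on $\int_0^t\|\nabla V\|_{L_\infty}\,d\tau$, which is finite thanks to \eqref{V-bes}. \emph{(b)} Estimate $R$ through the heat semigroup with coefficient $\mu$: the source contains the commutator $[V\cdot\nabla,\cP]w$, the quadratic term $w\cdot\nabla w$, and $a$--dependent terms which, when measured in $L_1(\dot B^0_{2,1})$ and combined with the bounds from (a), produce a gain of $\nu^{-1/2}$ provided the smallness hypothesis on $\nu$ in the theorem is used to absorb the quadratic-in-$\cE$ contributions. \emph{(c)} The nonlinear products $a\nabla^2 v$, $v\cdot\nabla v$ and $a\nabla a$ in $\dot B^0_{2,1}$ are handled by Bony's paraproduct decomposition together with the endpoint product law $\dot B^0_{2,1}\cdot\dot B^1_{2,1}\hookrightarrow\dot B^0_{2,1}$ valid in dimension two.

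\textbf{The main obstacle} lies in closing (b): the coupling terms in the equation for $R$ involve both $V$, which is merely controlled in critical norms and has a large size $M$, and $a$, which is only small when multiplied by $\nu$. Any naive bound therefore produces either a factor $\nu$ or a factor $e^{CM}$ times something quadratic in $\cE$, and one must arrange the weights in the definition of $\cE$ (namely the $\nu$ in front of $\|a\|_{\dot B^1_{2,1}}$ and the $\nu^{1/2}$ in front of the $R$--norm) so that after multiplication the dangerous terms reappear with a factor $\nu^{-1/2}$ and can be absorbed by the smallness assumption on $\nu^{-1}(\|a_0\|+\cdots)$. Once this balance is set correctly, the bootstrap closes and the claimed global bound follows.
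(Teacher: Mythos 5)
Your proposal follows the same overall blueprint as the paper: perturb around the global 2D incompressible flow $V$ furnished by Theorem~\ref{Th:NS}, split $v-V$ into the potential block $\cQ v$ (coupled to $a$) and the solenoidal block $\cP v - V$, run a priori estimates on a maximal interval, track the weighted norm $\|\cQ v,a\|+\nu\|a\|_{\dot B^1_{2,1}}+\nu^{1/2}\|\cP v-V\|$, and close by Gronwall once $\nu^{-1/2}$ provides the needed smallness. Local existence via the critical $\dot B^s_{2,1}$ theory, the continuation criterion, and the role of $\int_0^t\|\nabla V\|_{L_\infty}$ in producing the factor $e^{CM}$ are all correctly identified.

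You do, however, diverge from the paper on two technical points, and it is worth recording why the paper's choices are not merely cosmetic. First, you propose to divide the momentum equation by $\varrho=1+a$, so as to obtain a ``heat equation with diffusivity $\nu$'' for $\cQ v$. After that division the viscous term becomes $\frac{\mu\Delta v+(\lambda+\mu)\nabla\div v}{1+a}$, which no longer commutes with the Leray projectors: applying $\cQ$ creates mixed $\cP$/$\cQ$ interaction terms whose size is $O(\|a\|)$ times second-order derivatives of $v$, and one must check these are absorbable. The paper instead keeps the conservative form and transfers $\cQ(a v_t)$, $\cQ(a V_t)$ (and similarly for $\cP$) to the right-hand side (see~\eqref{B1}, \eqref{A1}), which leaves constant-coefficient operators $-\nu\Delta\cQ u$ and $-\mu\Delta\cP u$; the $a\,v_t$ sources are then harmless precisely because $\|a\|$ carries a factor $\nu^{-1}$ in the a priori bound. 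Second, you invoke an ``effective-velocity trick of \cite{D1}'' to decouple the $(a,\cQ v)$ block, but \cite{D1} (and this paper in Step~2) actually proceeds differently: after dyadic localization it builds a Lyapunov functional $\cL_j^2=\int(2a_j^2+2|\cQ u_j|^2+2\nu\cQ u_j\cdot\nabla a_j+|\nu\nabla a_j|^2)\,dx$ whose cross term cancels $\nabla\div\cQ u_j$ and whose dissipation rate is the hybrid $\min(\nu 2^{2j},\nu^{-1})$ -- parabolic at low frequencies, damped at rate $\nu^{-1}$ at high frequencies. The effective velocity $\cQ u+\nu^{-1}\nabla(-\Delta)^{-1}a$ is a related but distinct device from later works; it could also be made to work, but the weighted energy functional is what makes the $\nu$- and $\nu^{1/2}$-weights in the stated estimate fall out directly, and it is the mechanism behind separating $a$ into $a^\ell$ and $a^h$ with different dissipation in~\eqref{p14}. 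Your sketch does not record this hybrid rate, which is the crucial point that explains why large $\nu$ gains nothing on high-frequency density oscillations (only $\nu^{-1}$ damping) yet still controls the low-frequency acoustics.
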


Let us emphasize that in contrast with the global existence results cited above, we do not require any smallness 
condition on  the initial velocity : the volume viscosity $\lambda$ just has to be  sufficiently large. 
The mechanism underneath is that having large $\lambda$ provides strong dissipation on the potential part
of the velocity, and thus makes   our flow almost incompressible.
At the same time,  strong dissipation does not  involve the divergence free part of the flow, but as
we are in dimension two, it is known that it generates a global strong incompressible solution.
In fact, our statement may be seen as   a  stability result for incompressible flows 
within compressible flows. 

Our  result  has some similarity with  that  of the first author in \cite{D3,D4} where it is
shown that large initial velocities give rise to global strong solutions in the low Mach number asymptotics. 
However, the mechanism leading to global existence therein strongly relies on the dispersive (or highly oscillating)
properties of the acoustic wave equations. This is  in sharp contrast with the situation we are looking at
here, where dispersion completely disappears when $\lambda\to+\infty.$
\smallbreak
There are also examples of large data generating global strong solutions to the compressible Navier-Stokes
equations, independently  of any  asymptotic  considerations. In this regard, one has to mention the  result by 
 Kazhikov-Weigant \cite{VK}  in the two dimensional case, where it is assumed that  the volume viscosity $\lambda$ has some suitable 
dependence  with respect to the density  the density (like $\lambda(\varrho)=\varrho^\beta$ for some $\beta>3$). 
In contrast, here we do not require any particular nonlinear structure of the viscosity 
coefficients, but rather that the volume viscosity is large enough.
Finally, in a recent paper  \cite{Haspot3} dedicated to the shallow water equations (that is $\mu$ depends
linearly on $\varrho$ and $\lambda=0$), B. Haspot established the existence of global strong solutions
allowing for large potential  part of the initial velocity.
\medbreak
As a  by product of Theorem \ref{th:2D}, we get that 
 $(\varrho,v)\to(1,V)$ with a convergence rate of order $\nu^{-1/2}.$ This is stated more exactly
 in the following corollary. 
\begin{cor}\label{c:2D}
Let $v_0$ be any vector field in $\dot B^0_{2,1}(\R^2),$ and $M$ be defined by \eqref{def:M}. Then for large enough $\nu$ (or equivalently $\lambda$), 
System \eqref{eq:NSC} supplemented with initial density $1$ and initial velocity $v_0$
has  a unique global solution $(\varrho,v)$ in the space given by \eqref{eq:reg2D}.
Furthermore,  if $V$ stands for the solution to \eqref{NS} then  $(\varrho,v)\to(1,V)$ as follows:
    $$     \displaylines{\quad\|\varrho-1\|_{L_\infty(\R_+;\dot B^1_{2,1})}+\|\nabla^2\cQ v\|_{L_1(\R_+;\dot B^0_{2,1})}
+  \|\cP v -V\|_{L_\infty(\R_+;\dot B^0_{2,1})}    \hfill\cr\hfill+
  \|\cP v_t -V_t, \mu\nabla^2 (\cP v -V)\|_{L_1(\R_+;\dot B^0_{2,1})}   \leq C\nu^{-1/2}\,\sqrt{\mu}.\quad}$$
\end{cor}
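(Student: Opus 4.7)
The plan is to derive this corollary as a transparent specialization of Theorem \ref{th:2D} to the case $\varrho_0 \equiv 1$, i.e., $a_0 = 0$. Under this choice, the terms $\|a_0\|_{\dot B^0_{2,1}}$ and $\nu\|a_0\|_{\dot B^1_{2,1}}$ appearing in both the smallness condition and the a priori bound of the theorem vanish. Since $M$ as defined by \eqref{def:M} depends only on $V_0 = \cP v_0$ and on $\mu$, but not on $\nu$, the smallness assumption of the theorem reduces to
$$
Ce^{CM}\bigl(\|\cQ v_0\|_{\dot B^0_{2,1}} + M^2 + \mu^2\bigr) \leq \sqrt{\nu}\,\sqrt{\mu},
$$
whose left-hand side is independent of $\nu$. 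Hence for all $\nu$ (equivalently, for all $\lambda$) sufficiently large, the hypothesis is satisfied and Theorem \ref{th:2D} furnishes the unique global solution $(\varrho, v)$ with the regularity announced in \eqref{eq:reg2D}.

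The convergence rates are then obtained by reading off each term from the main a priori bound of Theorem \ref{th:2D} and dividing by the relevant power of $\nu$. Denote
$$
K := Ce^{CM}\bigl(\|\cQ v_0\|_{\dot B^0_{2,1}} + M^2 + \mu^2\bigr),
$$
which is independent of $\nu$ and, by the smallness condition, satisfies $K \leq \sqrt{\nu\mu}$. The bound of the theorem controls simultaneously $\nu\|a\|_{L_\infty(\dot B^1_{2,1})}$, $\|\nu\nabla^2\cQ v\|_{L_1(\dot B^0_{2,1})}$, $\nu^{1/2}\|\cP v - V\|_{L_\infty(\dot B^0_{2,1})}$ and $\nu^{1/2}\|\cP v_t - V_t, \mu\nabla^2(\cP v - V)\|_{L_1(\dot B^0_{2,1})}$ by $K$. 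Dividing through by the respective $\nu$-weights and using $K \leq \sqrt{\nu\mu}$ to estimate the resulting expressions gives the bound $\sqrt{\mu}/\sqrt{\nu}$ for each of the four quantities on the left-hand side of the corollary's inequality. Summing and absorbing data-dependent factors into the constant $C$ yields the claimed estimate.

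There is essentially no obstacle: the only point requiring care is to observe that, once the initial density is fixed to one, the right-hand side of the main estimate in Theorem \ref{th:2D} is $\nu$-independent, which is precisely what permits sending $\lambda$ (equivalently $\nu$) to $+\infty$ while retaining quantitative control of all the norms of $(\varrho, v)$ in terms of the data $v_0$ and the shear viscosity $\mu$ alone. The corollary is in this sense a direct quantitative reformulation of Theorem \ref{th:2D} in the regime of an initially homogeneous density.
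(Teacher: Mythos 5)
Your approach is exactly the one the paper leaves implicit (the corollary is presented as a ``by-product'' of Theorem~\ref{th:2D}, with no separate argument): specialize the theorem to $a_0=0$, note that the smallness hypothesis then has a $\nu$-independent left-hand side since $\cQ u_0=\cQ v_0$ and $M$ depends only on $V_0,\mu$, and read the rates off the main a priori bound. That is correct and matches the paper.

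One intermediate sentence is imprecise and worth fixing. You claim that ``dividing through by the respective $\nu$-weights and using $K\le\sqrt{\nu\mu}$ gives the bound $\sqrt{\mu}/\sqrt{\nu}$ for each of the four quantities.'' This is true only for the first two (the density and $\nabla^2\cQ v$ terms), which carry a weight $\nu$: there $K/\nu\le\sqrt{\mu/\nu}$ does follow from $K\le\sqrt{\nu\mu}$. For the two $\cP v-V$ terms the weight is only $\nu^{1/2}$, and inserting $K\le\sqrt{\nu\mu}$ gives merely $K\nu^{-1/2}\le\sqrt{\mu}$, a bound that does not decay. The correct argument there is simply that $K$ is a fixed, $\nu$-independent constant, so $K\nu^{-1/2}=(K/\sqrt{\mu})\,\sqrt{\mu}\,\nu^{-1/2}$ and the factor $K/\sqrt{\mu}$ is absorbed into $C$; this is consistent with your closing remark about ``absorbing data-dependent factors into $C$,'' but the intermediate claim as stated is wrong for those two terms. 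This also makes visible that the constant $C$ in the corollary necessarily depends on the data through $K$, a point worth stating explicitly.

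Otherwise the proof is complete: the condition $\mu\le\nu$ from Theorem~\ref{th:2D} is automatic for $\nu$ large, and uniqueness and the regularity class \eqref{eq:reg2D} are inherited directly from the theorem.
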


Let us now describe our main result in the high-dimensional case $d\geq3.$ 
Then it turns out that our approach for exhibiting large global solutions  is essentially the same,
once it is known that the limit system \eqref{NS} supplemented 
with initial data $V_0:=\cP v_0$ has  a global strong solution with suitable regularity. 
However, as constructing such global solutions  in the large data case is still an open question, 
we will   assume \emph{a priori} that  $V_0$ generates a global solution $V$ 
in $\cC_b(\R_+;\dot B^{d/2-1}_{2,1}(\R^d)).$ This only requirement will ensure,  thanks to 
the  result of Gallagher-Iftimie-Planchon in \cite{GIP},  that we have in fact a stronger property, namely 
\begin{equation}\label{V-d-bes}
 V \in \cC_b(\R_+;\dot B^{d/2-1}_{2,1}(\R^d))\quad\hbox{\em and}\quad  V_t,\nabla^2V\in  L_1(\R_+;\dot B^{d/2+1}_{2,1}(\R^d)).
\end{equation}
\begin{thm}\label{th:dD}
 Assume that  $d \geq 3.$ Let $v_0 \in \dot B^{d/2-1}_{2,1}(\R^d)$ and $\varrho_0$ such that $a_0:=(\varrho_0-1)\in\dot B^{d/2-1}_{2,1}\cap \dot B^{d/2}_{2,1}(\R^d).$ Suppose that \eqref{NS} with initial datum $V_0:=\cP v_0$
 generates a unique global solution $V\in\cC_b(\R_+;\dot B^{d/2-1}_{2,1})$  (thus also \eqref{V-d-bes} is fulfilled), and 
 denote
$$
M:= \|V\|_{L_\infty(\R_+;\dot B^{d/2-1}_{2,1})}+\|V_t,\mu\nabla^2V\|_{L_1(\R_+;\dot B^{d/2-1}_{2,1})}.
$$
 There exists a (large) universal constant $C$ such that  if  $\nu$ satisfies
$$
Ce^{CM}\bigl( \|a\|_{\dot B^{d/2-1}_{2,1}}+\nu\|a_0\|_{\dot B^{d/2}_{2,1}}+\|\cQ u_0\|_{\dot B^{d/2-1}_{2,1}}+M^2
 +\mu^2 \bigr)\leq \sqrt\nu\,\sqrt{\mu},
 $$
then   \eqref{eq:NSC} has a unique global-in-time  solution $(\varrho,v)$  such that
 \begin{equation}
  \begin{array}{l}
   v \in \cC_b(\R_+;\dot B^{d/2-1}_{2,1}),\qquad v_t,\nabla^2v\in L_1(\R_+;\dot B^{d/2-1}_{2,1}),\\
  a:=(\varrho -1) \in\cC(\R_+; \dot B^{d/2-1}_{2,1}\cap \dot B^{d/2}_{2,1}) \cap
  L_2(\R_+;\dot B^{d/2}_{2,1}).
  \end{array}
 \end{equation}
In addition, 
$$
\displaylines{
\quad \|\cQ v\|_{L_\infty(\R_+;\dot B^{d/2-1}_{2,1})}+
  \|\cQ v_t, \nu\nabla^2\cQ v\|_{L_1(\R_+;\dot B^{d/2-1}_{2,1})}
    + \|a\|_{L_\infty(\R_+;\dot B^{d/2-1}_{2,1})} +
      \nu\|a\|_{L_\infty(\R_+;\dot B^{d/2}_{2,1})}\hfill\cr\hfill
     +\nu^{1/2}\bigl(\|\cP v -V\|_{L_\infty(\R_+;\dot B^{d/2-1}_{2,1})}+
  \|\cP v_t -V_t, \mu\nabla^2 (\cP v -V)\|_{L_1(\R_+;\dot B^{d/2-1}_{2,1})}\bigr) 
            \hfill\cr\hfill
    \leq   Ce^{CM}\bigl(\|a_0\|_{\dot B^{d/2-1}_{2,1}} +\nu \|a_0\|_{\dot B^{d/2}_{2,1}}+\|\cQ u_0\|_{\dot B^{d/2-1}_{2,1}} +M^2+\mu^2\bigr)
        \quad} $$
and   $(\varrho,v)\to(1,V)$ as follows:
    $$     \displaylines{\quad\|\varrho-1\|_{L_\infty(\R_+;\dot B^{d/2}_{2,1})}+\|\nabla^2\cQ v\|_{L_1(\R_+;\dot B^{d/2-1}_{2,1})}
+  \|\cP v -V\|_{L_\infty(\R_+;\dot B^{d/2-1}_{2,1})}    \hfill\cr\hfill+
  \|\cP v_t -V_t, \mu\nabla^2 (\cP v -V)\|_{L_1(\R_+;\dot B^{d/2-1}_{2,1})}   \leq C\nu^{-1/2}\,\sqrt{\mu}.\quad}$$
\end{thm}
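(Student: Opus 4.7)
The proof is a high-dimensional rerun of Theorem \ref{th:2D}. The only place where the two-dimensional assumption entered there was through the automatic global existence of the incompressible limit $V$, which is now imposed as a hypothesis and whose relevant norms are packed into the constant $M$. All estimates are performed at the critical scaling of \eqref{eq:NSC}: the index $0$ is replaced by $d/2-1$ for the velocity and the pair $(0,1)$ by $(d/2-1,d/2)$ for the density; the homogeneous Besov spaces $\dot B^s_{2,1}(\R^d)$ continue to enjoy the product laws, composition estimates, and maximal regularity for the transport, heat, and Lam\'e operators that drove the two-dimensional argument.

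I would first invoke critical local-in-time existence in the spirit of \cite{D1} to produce a unique maximal solution $(\varrho,v)$ in the announced regularity class on some $[0,T^*)$, and then propagate an a priori bound that forces $T^*=+\infty$. Setting $a:=\varrho-1$ and $w:=\cP v-V$, so that $v=V+w+\cQ v$, the system reorganises as a transport equation for $a$ with main source $-(1+a)\,\div\cQ v$, a Lam\'e equation for $\cQ v$ of diffusion coefficient $\nu$ forced by $\nabla(P(1+a)-P(1))$ together with convective and $a$-weighted remainders, and a heat equation for $w$ of diffusion coefficient $\mu$ forced by $\cP$ applied to quadratic terms in $(V,w,\cQ v)$ plus $a$-dependent corrections of the stress tensor. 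Introducing the weighted norm
\begin{equation*}
\begin{aligned}
X(T):={}&\|\cQ v\|_{L_\infty(0,T;\dot B^{d/2-1}_{2,1})}+\|\cQ v_t,\nu\nabla^2\cQ v\|_{L_1(0,T;\dot B^{d/2-1}_{2,1})}\\
&{}+\|a\|_{L_\infty(0,T;\dot B^{d/2-1}_{2,1})}+\nu\|a\|_{L_\infty(0,T;\dot B^{d/2}_{2,1})}\\
&{}+\sqrt{\nu}\,\bigl(\|w\|_{L_\infty(0,T;\dot B^{d/2-1}_{2,1})}+\|w_t,\mu\nabla^2w\|_{L_1(0,T;\dot B^{d/2-1}_{2,1})}\bigr),
\end{aligned}
\end{equation*}
and combining the three linear estimates with paraproduct and composition bounds on the nonlinear right-hand sides, one expects an inequality of the shape
\begin{equation*}
X(T)\leq C e^{CM}\bigl(\|a_0\|_{\dot B^{d/2-1}_{2,1}}+\nu\|a_0\|_{\dot B^{d/2}_{2,1}}+\|\cQ v_0\|_{\dot B^{d/2-1}_{2,1}}+M^2+\mu^2\bigr)+\frac{C e^{CM}}{\sqrt{\nu\mu}}\,X(T)^2,
\end{equation*}
the factor $e^{CM}$ stemming from Gronwall applied to the convection of $a$ by $V$ (using the $L_1$-in-time smoothness of \eqref{V-d-bes}), and $M^2$ from the quadratic-in-$V$ forcing of the $w$-equation. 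The smallness hypothesis on $\nu$ is precisely calibrated so that the linear part above is bounded by $\sqrt{\nu\mu}/(4C e^{CM})$; a standard continuity argument on $\{T<T^*:X(T)\leq 2Ce^{CM}(\cdots)\}$ then closes the a priori bound globally in time and yields $T^*=+\infty$. The convergence rate $\nu^{-1/2}$ of $(\varrho,v)$ to $(1,V)$ is read off the definition of $X$.

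The main technical obstacle is the accurate tracking of the $\nu$-dependence in the density-velocity coupling. After multiplication of the momentum equation by $1/(1+a)$ and application of $\cQ$, a term $a\,\Delta\cQ v/(1+a)$ appears whose $L_1(\dot B^{d/2-1}_{2,1})$ control pairs $a\in L_\infty(\dot B^{d/2}_{2,1})$ with $\nabla^2\cQ v\in L_1(\dot B^{d/2-1}_{2,1})$; both factors carry a weight $\nu$ in $X$, so a crude product estimate would lose a factor $\nu$ and destroy the bootstrap. A careful low/high-frequency decomposition of the Besov norms is required to extract the gain $(\nu\mu)^{-1/2}$ that renders the nonlinear part of the inequality above subcritical for large $\nu$. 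The pressure contribution $\nabla(P(1+a)-P(1))$ in the $\cQ v$ equation is handled analogously via composition estimates in $\dot B^{d/2}_{2,1}$ combined with Lam\'e maximal regularity of order $\nu$. Uniqueness is finally settled by the standard subtraction argument one unit of regularity below.
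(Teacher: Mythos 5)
Your overall strategy---subtract the incompressible flow, split into divergence-free and potential parts, weight the potential part by $\sqrt\nu$, close a bootstrap---matches the paper's. But there is a genuine gap in the way you propose to estimate $a$ and $\cQ v$: you announce ``a transport equation for $a$'' and ``a Lam\'e equation for $\cQ v$'' as two separate linear estimates with the coupling moved to the right-hand side. That route cannot close. The transport equation alone gives no decay of $a$, yet the theorem claims $a\in L_2(\R_+;\dot B^{d/2}_{2,1})$, and closing the Gronwall argument requires an $L_1$-in-time bound on $\nabla a$ (your inequality has $X(T)^2$ on the right, which you can only digest if some derivative of $a$ appears with an integrable-in-time norm). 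This is precisely why the paper does \emph{not} estimate \eqref{a} and \eqref{B1} separately: it performs a coupled energy estimate on the localized pair $(a_j,\cQ u_j,\nabla a_j)$. Testing the momentum equation by $\nabla a_j$ and the gradient of the transport equation by $\cQ u_j$, and forming the quadratic functional $\cL_j^2:=\int\bigl(2a_j^2+2|\cQ u_j|^2+2\nu\,\cQ u_j\cdot\nabla a_j+|\nu\nabla a_j|^2\bigr)dx$, the cross term produces a manufactured dissipation $\nu\int|\nabla a_j|^2$ for the density, see \eqref{p9}--\eqref{p11b}. Together with the low/high-frequency split of $a$ at the threshold $2^k\nu\sim 1$ (so that the damping rate $\min(\nu 2^{2j},\nu^{-1})$ translates into $\nu\nabla^2 a^\ell$ and $\nabla a^h$ in $L_1$ Besov norms), this is what makes $Y_d$ finite and the bootstrap possible. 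Your functional $X(T)$ contains only $L_\infty$-in-time norms of $a$, with no dissipative counterpart, so it cannot absorb the quadratic terms.

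A secondary issue: you divide the momentum equation by $1+a$, which generates the term $a\Delta\cQ v/(1+a)$ you then flag as problematic. The paper avoids this by writing $\varrho v_t = v_t + a v_t$ and moving $a v_t$ to the right-hand side (see \eqref{B1}); the coupling is then the benign product $a\,u_t$ bounded by $\|a\|_{\dot B^{d/2}_{2,1}}\|u_t\|_{\dot B^{d/2-1}_{2,1}}$, which carries the clean weight $\nu^{-1}$ since $\nu\|a\|_{\dot B^{d/2}_{2,1}}$ is in $X_d$ and $\|\cQ u_t\|_{L_1\dot B^{d/2-1}_{2,1}}$ is in $Y_d$---no further low/high-frequency gymnastics needed for this particular term. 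Your claimed gain $(\nu\mu)^{-1/2}$ is also not the one that appears; the relevant gains are $\nu^{-1}$ and, for the cross term between $X_d$ and $V_d$ coming from the interpolation $\|\cQ u\|_{\dot B^{d/2}_{2,1}}\lesssim\|\cQ u\|_{\dot B^{d/2-1}_{2,1}}^{1/2}\|\nabla^2\cQ u\|_{\dot B^{d/2-1}_{2,1}}^{1/2}$, a gain $\nu^{-1/2}$; the condition on $\nu$ in the theorem is then tuned so that the weakest of these, $\nu^{-1/2}$, still beats the data and $e^{CM}$ factors. To repair your proposal you should replace the ``three linear estimates'' step by the paper's coupled dyadic energy functional and add the $L_1$-in-time dissipative norms of $a^\ell$ and $a^h$ to your bootstrap quantity.
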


Let us emphasize that even in  dimension $d\geq3,$ there are  examples of large initial data for 
\eqref{NS} generating global smooth solutions. One can refer for instance to \cite{BMN,Gal,M01,PRST} and citations therein.
 Therefore, our second result indeed points out example of large data 
giving rise to global strong solutions for  the compressible system \eqref{eq:NSC}.
\medbreak
Let us finally say a few words on our functional setting. 
Throughout,  we used the so-called critical Besov spaces of type  $\dot B^{s}_{2,1}(\R^2),$  as they are known to provide
essentially the largest class of data for which System \eqref{eq:NSC} may be solved by energy type methods, and 
is well-posed in the sense of Hadamard.
As a matter of fact,  our proof relies on   a suitable  energy method applied to 
the system after localization according to  Littlewood-Paley decomposition (see the definition is the next section).
We believe that it would be possible to derive similar qualitative results in the critical  $L_p$ Besov framework (spaces $\dot B^{s}_{p,1}$). However, we refrained from doing that both because it makes the proof 
 significantly more technical, and because   there are some restrictions to the admissible 
values of $p$ (e.g. $2\leq p<4$ if $d=2$)  so that  the improvement compared to $p=2$ is not so big.
\medbreak 
The rest of the paper unfolds as follows. In the next section,  we introduce Besov spaces and recall basic facts about them. 
Section \ref{s:proof} is devoted to proving both Theorem \ref{th:2D} and \ref{th:dD}. 
In fact, we are able to provide a common proof to  both results as the only difference
between dimension $d=2$ and dimension $d\geq3$ is that we are always guaranteed 
that $V_0$ gives rise to a global regular solution in the former case while it is an additional assumption 
in the latter case.  In Appendix we show Theorem \ref{Th:NS} concerning
the regularity of 2D incompressible flow.


\section{Notation, Besov spaces and basic properties}\label{s:notation}

The \emph{Littlewood-Paley decomposition}  plays a central role in our analysis. 
To define it,   fix some  smooth radial non increasing function $\chi$
supported in the ball $B(0,\frac 43)$ of $\R^d,$ and with value $1$ on, say,   $B(0,\frac34),$ then set
$\varphi(\xi)=\chi(\xi/2)-\chi(\xi).$ We have
$$
\qquad\sum_{j\in\Z}\varphi(2^{-j}\cdot)=1\ \hbox{ in }\ \R^d\setminus\{0\}
\quad\hbox{and}\quad \Supp\varphi\subset \big\{\xi\in\R^d : 3/4\leq|\xi|\leq8/3\big\}\cdotp
$$
The homogeneous dyadic blocks $\ddj$ are defined on tempered distributions by
$$\ddj u:=\varphi(2^{-j}D)u:=\cF^{-1}(\varphi(2^{-j}\cdot)\cF u)=2^{jd}h(2^j\cdot)\star u
\quad\hbox{with}\quad h:=\cF^{-1}\varphi.
$$
In order to ensure that 
\begin{equation}\label{eq:decompo}
f=\sum_{j\in\Z}\ddj f\quad\hbox{in}\quad\cS'(\R^d),
\end{equation}
we restrict our attention to  those tempered distributions $f$ such that
\begin{equation}\label{eq:Sh}
\lim_{k\rightarrow-\infty}\|\dot S_kf\|_{L_\infty}=0,
\end{equation}
where $\dot S_kf$ stands for the low frequency cut-off defined by $\dot S_kf:=\chi(2^{-k}D)f$.
\begin{defi}\label{d:besov}
 For $s\in\R$   the homogeneous Besov space $\dot B^s_{2,1}:=\dot B^{s}_{2,1}(\R^d)$ is  the
set of tempered distributions $f$ satisfying \eqref{eq:Sh} and
$$
\|f\|_{\dot B^s_{2,1}}:=\sum_{j\in\Z} 2^{js}
\|\ddj  f\|_{L_2}<\infty.$$
\end{defi}
\begin{rmk} For $s\leq d/2$ (which is the only case we are concerned with in
this paper), $\dot B^s_{2,1}$ is a Banach space which coincides with 
the completion for $\|\cdot\|_{\dot B^s_{2,1}}$ of the set $\cS_0(\R^d)$ of Schwartz functions
with Fourier transform supported away from the origin. 
\end{rmk}
In many parts of the paper, it will be suitable to split tempered distributions $f$ (e.g. the unknown $a:=\varrho-1$) into  low and high frequencies  as follows:
\begin{equation}\label{eq:lhf}
f^\ell:=\sum_{2^k\nu\leq 1}\ddk f\quad\hbox{and}\quad
f^h:=\sum_{2^k\nu>1}\ddk f.
\end{equation}
The following Bernstein inequalities play an important role in our analysis:
\begin{itemize}\item Direct Bernstein inequality: for all $1\leq p\leq q\leq\infty$ and $k\in\N,$
\begin{equation}
 \|\dot \Delta_j\nabla^k u \|_{L_q(\R^d)} \leq C 2^{j(k+d(\frac{1}{p} - \frac{1}{q}))} \|\dot \Delta_j u\|_{L_p(\R^d)}.
\end{equation}
\item Reverse Bernstein inequality: for all $1\leq p\leq\infty,$ we have
$$ \|\dot \Delta_j u \|_{L_p(\R^d)} \leq C2^{-j}\|\dot \Delta_j\nabla u \|_{L_p(\R^d)}. $$
\end{itemize}
The following  lemma will be needed to estimate the nonlinear terms
of \eqref{eq:NSC} and \eqref{NS}. It is just a consequence of Bony decomposition and of continuity results for the paraproduct and remainder operators, as stated in e.g. Theorem 2.52 of  \cite{BCD}. 
\begin{lem}\label{l:para1}
 Let $g \in \dot B^{s_1}_{2,1}(\R^d)$ and  $h\in\dot B^{s_2}_{2,1}(\R^d)$ for some couple $(s_1,s_2)$ satisfying 
 $$
 s_1\leq d/2,\quad s_2\leq d/2\ \hbox{ and }\ s_1+s_2>0.
 $$
 Then $gh \in \dot B^{s_1+s_2-d/2}_{2,1}(\R^d),$ and we have
\begin{equation}
\|gh\|_{ \dot B^{s_1+s_2-d/2}_{2,1}} \leq C \|g\|_{\dot B^{s_1}_{2,1}}\|h\|_{\dot B^{s_2}_{2,1}}.
\end{equation}
\end{lem}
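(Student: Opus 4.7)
The plan is to apply Bony's homogeneous paraproduct decomposition
\[
gh = T_g h + T_h g + R(g,h),
\]
where
\[
T_g h := \sum_{j\in\Z} \dot S_{j-1} g \,\dot\Delta_j h
\quad\text{and}\quad
R(g,h) := \sum_{j\in\Z} \dot\Delta_j g \,\widetilde{\dot\Delta}_j h
\]
with $\widetilde{\dot\Delta}_j := \dot\Delta_{j-1}+\dot\Delta_j+\dot\Delta_{j+1}$, and to bound each of the three pieces separately in $\dot B^{s_1+s_2-d/2}_{2,1}$.

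For $T_g h$: each summand $\dot S_{j-1} g \,\dot\Delta_j h$ is spectrally supported in an annulus of radius $\sim 2^j$, so its contribution to $\dot B^{\sigma}_{2,1}$ is read off its $L_2$ norm weighted by $2^{j\sigma}$. The hypothesis $s_1 \leq d/2$ together with the direct Bernstein inequality gives
\[
\|\dot S_{j-1}g\|_{L_\infty} \leq \sum_{k\leq j-2}\|\dot\Delta_k g\|_{L_\infty}
\lesssim \sum_{k\leq j-2} 2^{k(d/2-s_1)}\bigl(2^{ks_1}\|\dot\Delta_k g\|_{L_2}\bigr)
\lesssim 2^{j(d/2-s_1)}\|g\|_{\dot B^{s_1}_{2,1}},
\]
where the last step uses $d/2-s_1\geq 0$ to absorb the geometric factor. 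Hence $2^{j(s_1+s_2-d/2)}\|\dot S_{j-1}g\,\dot\Delta_j h\|_{L_2} \lesssim 2^{js_2}\|\dot\Delta_j h\|_{L_2}\,\|g\|_{\dot B^{s_1}_{2,1}}$, and summing in $j$ delivers the desired estimate. The twin term $T_h g$ is handled by the symmetric argument, which relies on the second hypothesis $s_2 \leq d/2$.

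For $R(g,h)$: unlike the paraproducts, each summand $\dot\Delta_j g\,\widetilde{\dot\Delta}_j h$ is spectrally supported in a \emph{ball} of radius $\sim 2^j$, so a dyadic block $\dot\Delta_k R(g,h)$ receives contributions from all $j \geq k - N_0$ with $N_0$ a fixed integer. Using Bernstein to put one of the two factors in $L_\infty$, we obtain
\[
\|\dot\Delta_k R(g,h)\|_{L_2} \lesssim \sum_{j\geq k-N_0}2^{jd/2}\|\dot\Delta_j g\|_{L_2}\|\widetilde{\dot\Delta}_j h\|_{L_2}.
\]
Multiplying by $2^{k(s_1+s_2-d/2)}$, summing in $k$ and swapping the two sums leaves a factor $\sum_{k\leq j+N_0}2^{(k-j)(s_1+s_2)}$, which is finite \emph{precisely because} $s_1+s_2>0$. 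Combining with an $\ell^1$-Cauchy-Schwarz in $j$ bounds $R(g,h)$ in $\dot B^{s_1+s_2-d/2}_{2,1}$ by $\|g\|_{\dot B^{s_1}_{2,1}}\|h\|_{\dot B^{s_2}_{2,1}}$, up to a multiplicative constant.

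The main obstacle is this last step: the three conditions $s_1\leq d/2$, $s_2\leq d/2$, $s_1+s_2>0$ are each sharp for one of the three pieces of Bony's decomposition, and losing the strict positivity $s_1+s_2>0$ introduces a logarithmic divergence in the remainder. Once the three pieces are bounded, the verification that $gh\in\cS'(\R^d)$ satisfies the low-frequency cut-off condition \eqref{eq:Sh} required by Definition \ref{d:besov} is automatic from the resulting bound in the target Besov space.
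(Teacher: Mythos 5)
Your overall strategy is the right one and matches what the paper's illustrative computation does (the paper itself only carries out the special case $d=2$, $s_1=s_2=1/2$, relying on the general statement in \cite{BCD}). The paraproduct estimates for $T_gh$ and $T_hg$ are correct, including the observation that they only need $s_1\leq d/2$ and $s_2\leq d/2$ respectively. The trouble is in the remainder term, where your displayed inequality and the sentence that follows it are inconsistent with each other.

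You write that ``Bernstein puts one of the two factors in $L_\infty$'' and display
\[
\|\dot\Delta_k R(g,h)\|_{L_2} \lesssim \sum_{j\geq k-N_0}2^{jd/2}\|\dot\Delta_j g\|_{L_2}\|\widetilde{\dot\Delta}_j h\|_{L_2}.
\]
This is indeed what ``$\dot\Delta_j g$ in $L_\infty$'' gives, but then multiplying by $2^{k(s_1+s_2-d/2)}$ and writing $\|\dot\Delta_j g\|_{L_2}\|\widetilde{\dot\Delta}_j h\|_{L_2}\approx 2^{-j(s_1+s_2)}c^g_jc^h_j$ produces the factor
$2^{(k-j)(s_1+s_2-d/2)}$, not $2^{(k-j)(s_1+s_2)}$. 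After swapping the sums, $\sum_{k\leq j+N_0}2^{(k-j)(s_1+s_2-d/2)}$ converges only when $s_1+s_2>d/2$, a strictly stronger hypothesis than $s_1+s_2>0$; for $0<s_1+s_2\leq d/2$ this sum diverges and the argument as written fails. The correct move, which is what the paper does via the embedding $\dot B^{d/2}_{1,1}\hookrightarrow\dot B^0_{2,1}$ (written $\dot B^1_{1,1}(\R^2)\subset\dot B^0_{2,1}(\R^2)$ in their $d=2$ case), is to first use Cauchy--Schwarz to place the \emph{product} $\dot\Delta_j g\,\widetilde{\dot\Delta}_j h$ in $L_1$, and then apply direct Bernstein on the \emph{outer} block $\dot\Delta_k$ (whose frequency is $\sim 2^k$, not $2^j$): this gives $\|\dot\Delta_k(\dot\Delta_j g\,\widetilde{\dot\Delta}_j h)\|_{L_2}\lesssim 2^{kd/2}\|\dot\Delta_j g\|_{L_2}\|\widetilde{\dot\Delta}_j h\|_{L_2}$. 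With $2^{kd/2}$ in place of $2^{jd/2}$ the factor becomes $2^{(k-j)(s_1+s_2)}$ as you claim, and the hypothesis $s_1+s_2>0$ is exactly what is needed. So the conclusion you announce is the right one, but the displayed bound and the stated justification for it do not produce it; you need to replace the $L_\infty$-on-one-factor step by Cauchy--Schwarz-plus-Bernstein-on-$\dot\Delta_k$.
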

Finally, let us recall that  any vector field $w=(w^1,\cdots,w^d)$ with components in $\cS'(\R^d)$ satisfying \eqref{eq:Sh}
may be decomposed into one potential part $\cQ w$ and one divergence-free part $\cP w,$ where the projectors $\cP$ 
and $\cQ$ are defined by  
$$
\cQ:=-(-\Delta)^{-1}\nabla\div\quad\hbox{and}\quad \cP:={\rm Id}+(-\Delta)^{-1}\nabla\div.
$$ 
In particular, because $\cP$ and $\cQ$ are smooth homogeneous of degree $0$ Fourier multipliers, 
they map $\dot B^s_{2,1}(\R^d)$ to itself for any $s\leq d/2.$


\section{The proof of the main results}\label{s:proof}

We shall get Theorems \ref{th:2D} and \ref{th:dD}  altogether. In fact, if it is known that the limit system \eqref{NS} 
with initial data $V_0:=\cP v_0$ has a unique solution in our functional  setting, then the proof is 
the same in  any dimension $d\geq2.$ 
The only difference is that  in the 2D case the existence of  a global solution to \eqref{NS} 
is ensured by Theorem \ref{Th:NS} for arbitrary large data whereas, if $d\geq3,$
it is indeed a supplementary assumption.
\medbreak
To simplify the presentation, we  assume from now on that the shear viscosity $\mu$ is  $1.$ This  is not restrictive owing 
to the following change of unknowns and volume viscosity: 
\begin{equation}\label{eq:change}
(\wt\varrho,\wt v)(t,x):= (\varrho,v)(\mu t,\mu x)\quad\hbox{and }\ \wt\lambda=\lambda/\mu.
\end{equation}

We concentrate our attention on  the proof of \emph{global in time} a priori estimates, 
as the local existence issue is nowadays well understood. For example, just assuming that $\varrho_0$ is bounded 
away from zero and that the regularity assumptions of Theorems \ref{th:2D} or \ref{th:dD} are fulfilled, 
Theorem 2 of \cite{D5} provides us with  a unique local solution $(\varrho,v)$ to \eqref{eq:NSC} such that 
\begin{equation}\label{eq:reg}a:=(\varrho-1)\in \cC([0,T);\dot B^{d/2}_{2,1})\quad\hbox{and}\quad
v\in\cC([0,T);\dot B^{d/2-1}_{2,1})\cap L_1(0,T;\dot B^{d/2+1}_{2,1}).\end{equation}
Furthermore,    continuation  beyond $T$ is possible if 
\begin{equation}\label{eq:ex}
\int_0^T\|\nabla v\|_{L_\infty}\,dt<\infty,\quad \|a\|_{L_\infty(0,T;\dot B^{d/2}_{2,1})}<\infty\ \hbox{ and }\ 
\inf_{(t,x)\in[0,T)\times\R^d} \varrho(t,x)>0.
\end{equation}
Finally, from standard results for the transport equation, we see that the additional $\dot B^{d/2-1}_{2,1}$ regularity of
$a$ is preserved through the evolution.
\medbreak
Next, to compare the solutions of  \eqref{eq:NSC} and \eqref{NS},  we set  $u:= v -V.$ {}From the very beginning, the potential $\cQ u$ and divergence-free $\cP u$ parts of the perturbation of the velocity are treated separately. On one hand, because $\cQ u=\cQ v,$
applying $\cQ$ to the velocity equation in \eqref{eq:NSC} yields
$$
(\cQ u)_t+\cQ((1+a)v\cdot\nabla v)-\nu\Delta\cQ u+P'(1+a)\nabla a=-\cQ(a v_t),
$$
hence using $v=\cQ u+\cP u+V$ and assuming $P'(1)=1$ (for notational simplicity), 
\begin{equation}\label{B1}
 (\cQ u)_t + \cQ( (u+V) \cdot \nabla \cQ u) - \nu \Delta \cQ u + \nabla a = -\cQ ( a V_t 
 +a u_t) -\cQ R_2
\end{equation}
with, denoting $k(a):=P'(1+a)-P'(1)=P'(1+a)-1,$ 
\begin{equation}\label{B2}
 R_2= (1+a)(u+V)\cdot\nabla\cP u +(1+a)(u+V)\cdot\nabla V+a(u+V)\cdot\nabla\cQ u+k(a)\nabla a,
\end{equation}
and $a$ satisfying 
\begin{equation}\label{a}
  a_t +\div (au) +\div \cQ u + V \cdot \nabla a=0. 
  \end{equation}
Initial data  are  $\cQ u |_{t=0} =\cQ v_0$ and $a|_{t=0}=a_0.$
\medbreak
On the other hand, applying $\cP$ to the velocity equation of \eqref{eq:NSC} and subtracting the equation for $\cP V=V$ in 
\eqref{NS}, we discover that 
$$\displaylines{
 (\cP u)_t+ \cP((u+V)\cdot \nabla \cP u) - \Delta \cP u = -\cP(a V_t + a u_t)\hfill\cr\hfill
 - \cP\Bigl((1+a)(u+V)\cdot \nabla \cQ u + (1+a) u \cdot \nabla V 
 + a (u+V) \cdot \nabla\cP u +aV\cdot\nabla V\Bigr),} $$
supplemented with the initial datum $\cP u|_{t=0}=0$ (as we assumed $V_0=\cP v_0$).
\medbreak
Note that because  $\cQ u\cdot\nabla\cQ u$  is a gradient, one may rewrite the above equation as
\begin{equation}\label{A1}
 (\cP u)_t+ \cP((u+V)\cdot \nabla \cP u) - \Delta \cP u = -\cP(a V_t + a u_t)-\cP R_1
 \end{equation}
 with 
\begin{multline}\label{A2}
 R_1 :=  (1+a)\cP u \cdot \nabla(V+\cQ u)  + (1+a)V\cdot \nabla \cQ u + (1+a) \cQ u \cdot \nabla V 
 \\+ a (u+V) \cdot \nabla\cP u +aV\cdot\nabla V +a\cQ u\cdot\nabla\cQ u.\end{multline}
Let us start the computations. The general approach is adapted from \cite{D1}: we localize equations 
\eqref{B1}, \eqref{a} and \eqref{A1} in the frequency space by means of the dyadic operators 
$\ddj,$ and perform suitable energy estimates. The key point 
is that we strive for  time pointwise estimates of $u,$  $a$ and $\nu\nabla a$ 
in the same (Besov) space. 
\smallbreak
In what follows, we denote by 
 $a^\ell$ and $a^h$  the low and high frequencies  parts  of $a,$ respectively (see \eqref{eq:lhf})
and set
$$
\begin{array}{ll}
 X_d(T):= \|\cQ u, a, \nu\nabla a\|_{L_\infty(0,T;\dot B^{d/2-1}_{2,1})}, &\!\!\!\quad 
 Y_d(T):= \| \cQ u_t, \nu \nabla^2 \cQ u,\nu\nabla^2a^\ell,\nabla a^h\|_{L_1(0,T;\dot B^{d/2-1}_{2,1})}, \\[5pt]
 Z_d(T):= \|\cP u\|_{L_\infty(0,T;\dot B^{d/2-1}_{2,1})}, &\!\!\!\quad 
 W_d(T):= \|\cP u_t, \nabla^2\cP u\|_{L_1(0,T;\dot B^{d/2-1}_{2,1})}.
 \end{array}
$$
We assume   that the maximal solution $(\varrho=1+a,v)$ of \eqref{eq:NSC} corresponding to data $(\varrho_0,v_0)$ 
is defined on the time interval $[0,T_\star)$ and satisfies \eqref{eq:reg}, 
and we fix some $M\geq0$ so that the `incompressible solution' $V$  to \eqref{NS} fulfills
\begin{equation}\label{f3a}
V_d(T):=  \|V\|_{L_\infty(0,T;\dot B^{d/2-1}_{2,1})}+ \|V_t,\nabla^2 V\|_{L_1(0,T;\dot B^{d/2-1}_{2,1})}
\leq M\quad\hbox{for all }\ T\geq0.
\end{equation}

As already pointed out  and proved in Appendix, in the 2D case, number $M$  may 
be expressed in terms of  $\|V_0\|_{\dot B^0_{2,1}(\R^2)}.$ 
\medbreak
We claim that if $\nu$ is large enough then one may find some (large) $D$ and (small) $\delta$
so that for all $T<T_\star,$  the following bounds are  valid:
\begin{equation}\label{f3}
 X_d(T)+Y_d(T) \leq D\quad\hbox{and}\quad
 Z_d(T)+W_d(T) \leq \delta.
\end{equation}


\subsubsection*{Step 1. Estimates for the divergence-free part of the velocity}

Applying $\ddj$ to \eqref{A1},   taking the $L_2$ inner product with $\ddj\cP u$ then using that $\cP^2=\cP,$ we discover that
$$\displaylines{
\frac 12\frac d{dt}\|\ddj\cP u\|_{L_2}^2+\|\nabla\ddj\cP u\|_{L_2}^2+\int\bigl(\ddj(u+V)\cdot\nabla\cP u\bigr)\cdot\ddj\cP u\,dx
\hfill\cr\hfill=-\int \ddj(a V_t+au_t+R_1)\cdot\ddj\cP u\,dx.}
$$
Then  using Bernstein's inequalities in the second term, 
swapping operators $\ddj$  and $u+V$ in the third term, and integrating by parts, we get
 we get for some universal constant $c>0,$
\begin{multline}\label{eq:Pu}
\frac 12\frac d{dt}\|\ddj\cP u\|_{L_2}^2+c\|\nabla^2\ddj\cP u\|_{L_2}\|\ddj\cP u\|_{L_2}\leq
\frac12\int|\ddj\cP u|^2\,\div u\,dx \\+ \int\bigl([u+V,\ddj]\cdot\nabla\cP u\bigr)\cdot\ddj\cP u\,dx
-\int \ddj(a V_t+au_t+R_1)\cdot\ddj\cP u\,dx.
\end{multline}
It is well known (see e.g. Lemma 2.100 in \cite{BCD}) that the commutator term may be estimated as follows:
\begin{equation}\label{eq:com}
2^{j({d/2-1})}\bigl\|\bigl[u+V,\ddj]\cdot\nabla\cP u\bigr\|_{L_2}\leq C c_j\|\nabla(u+V)\|_{\dot B^{d/2}_{2,1}}\|\cP u\|_{\dot B^{d/2-1}_{2,1}}
\quad\hbox{with}\quad \sum_{j\in\Z} c_j=1.
\end{equation}
Hence dividing  (formally) \eqref{eq:Pu} by $\|\dot \Delta_j \cP u\|_{L_2}$, multiplying by $2^{j({d/2-1})}$, integrating \eqref{eq:Pu}, remembering that $\cP u|_{t=0}=0$  and summing over $j$, we obtain
\begin{multline}\label{D1}
 \|\cP u\|_{L_\infty(0,T;\dot B^{d/2-1}_{2,1})} +  \|\nabla^2 \cP u\|_{L_1(0,T;\dot B^{d/2-1}_{2,1})} \\\lesssim
 \int_0^T\|\nabla(u+V)\|_{\dot B^{d/2}_{2,1}}\|\cP u\|_{\dot B^{d/2-1}_{2,1}}\,dt
   + \int_0^T\|a V_t + a u_t\|_{\dot B^{d/2-1}_{2,1}}\,dt
   +\int_0^T\|R_1\|_{\dot B^{d/2-1}_{2,1}}\,dt.
\end{multline}
Next we see, thanks to Lemma \ref{l:para1} that
$$
\begin{aligned}
 \|a V_t + a u_t\|_{L_1(0,T;\dot B^{d/2-1}_{2,1})}&\lesssim
\nu^{-1} \|\cQ u_t, \cP u_t, V_t\|_{L_1(0,T;\dot B^{d/2-1}_{2,1})} \| \nu a\|_{L_\infty(0,T;\dot B^{d/2}_{2,1})}\\&\lesssim 
\nu^{-1} (Y_d(T)+W_d(T) + V_d(T)) X_d(T).
\end{aligned}
$$
In order to bound  $R_1,$  we use the fact that
$$
 \| (1+a)\cP u \cdot \nabla(V+\cQ u)\|_{\dot B^{{d/2-1}}_{2,1}} 
\lesssim  (1+\|a\|_{\dot B^{d/2}_{2,1}})  \|\nabla(V+\cQ u)\|_{\dot B^{d/2}_{2,1}} \|\cP u\|_{\dot B^{d/2-1}_{2,1}}
 $$
and
$$
\|(1+a)(\cQ u \cdot \nabla V+V\cdot\nabla\cQ u)\|_{\dot B^{{d/2-1}}_{2,1}} \lesssim
  (1+\|a\|_{\dot B^{d/2}_{2,1}})\|\cQ u\|_{\dot B^{d/2}_{2,1}} \|V\|_{\dot B^{d/2}_{2,1}},
$$
whence, integrating on $[0,T]$ and using the interpolation inequality
$$
\|z\|_{\dot B^{d/2}_{2,1}}\leq C \|z\|_{\dot B^{d/2-1}_{2,1}}^{1/2}\|\nabla^2z\|_{\dot B^{d/2-1}_{2,1}}^{1/2}
\quad\hbox{for }\ z=V,\,\cQ u,$$
we get
$$
\|(1+a)(\cQ u \cdot \nabla V+V\cdot\nabla\cQ u)\|_{L_1(0,T;\dot B^{{d/2-1}}_{2,1})} \lesssim
 (1+\nu^{-1}X_d(T)) \nu^{-1/2}X_d(T)^{1/2}Y_d^{1/2}(T)V_d(T).$$
Finally, we have 
$$\begin{aligned}
\|a(u+V)\cdot\nabla\cP u\|_{L_1(0,T;\dot B^{{d/2-1}}_{2,1})} &\lesssim
\|a\|_{L_\infty(0,T;\dot B^{d/2}_{2,1})}\|u+V\|_{L_\infty(0,T;\dot B^{d/2-1}_{2,1})}
\|\nabla\cP u\|_{L_1(0,T;\dot B^{{d/2}}_{2,1})}\\&\lesssim  \nu^{-1}X_d(T)    \bigl(Z_d(T)+X_d(T)+V_d(T)\bigr) W_d(T),
\end{aligned}
$$
and
$$\displaylines{
\|aV\cdot\nabla V+a\cQ u\cdot\nabla\cQ u\|_{L_1(0,T;\dot B^{d/2-1}_{2,1})}\lesssim \|a\|_{L_\infty(0,T;\dot B^{d/2}_{2,1})}
\bigl(\|V\|_{L_\infty(0,T;\dot B^{d/2-1}_{2,1})}\|\nabla V\|_{L_1(0,T;\dot B^{d/2}_{2,1})}\hfill\cr\hfill
+\|\cQ u\|_{L_\infty(0,T;\dot B^{d/2-1}_{2,1})}\|\nabla\cQ u\|_{L_1(0,T;\dot B^{d/2}_{2,1})}\bigr).}
$$
Therefore,  we obtain 
$$\displaylines{
 Z_d(T)+W_d(T) \lesssim   \int_0^T \|\nabla V,\nabla\cP u,\nabla\cQ u\|_{\dot B^{d/2}_{2,1}} Z_d(t)\, dt
\hfill\cr\hfill+ \nu^{-1} (Y_d(T)+W_d(T) + V_d(T)) X_d(T)
  + \nu^{-1/2}X_d(T)^{1/2}Y_d^{1/2}(T)V_d(T) (1+\nu^{-1}X_d(T))\hfill\cr\hfill
+  \nu^{-1}X_d(T)    \bigl(Z_d(T)+X_d(T)+V_d(T)\bigr) W_d(T)
+\nu^{-1}X_d(T) (V_d^2(T)+\nu^{-1}X_d(T)Y_d(T)).}$$
 Assuming from now on that
\begin{equation}\label{eq:smalla}
X_d(T)\ll\nu,
\end{equation}
and using  Gronwall lemma, we conclude that 
\begin{multline}\label{eq:step1final}
 Z_d(T)+W_d(T) \leq Ce^{C\int_0^t \|\nabla V,\nabla\cP u,\nabla \cQ u\|_{\dot B^{d/2}_{2,1}}\,d\tau} 
\Bigl( \nu^{-1} (Y_d(T) + V_d(T)+W_d(T)) X_d(T)\\
  + \nu^{-1/2}X_d(T)^{1/2}Y_d^{1/2}(T)V_d(T)\\
+  \nu^{-1}X_d(T)    \bigl(Z_d(T)+X_d(T)+V_d(T)\bigr) W_d(T)
+\nu^{-1}X_d(T) V_d^2(T)\Bigr)\cdotp\end{multline}


\subsubsection*{Step 2.  Estimate on the potential part of the velocity and on the density}
To estimate the potential part of the velocity, we are required to consider the momentum and continuity equations
altogether. Now, localizing \eqref{a} and \eqref{B1} according to Littlewood-Paley operators, we discover that  
\begin{align}\label{p1a}
  &a_{j,t} +(u+V)\cdot\nabla a_j  +\div \cQ u_j =g_j\\\label{p1b}
 &\cQ u_{j,t} +  \cQ( (u+V) \cdot \nabla \cQ u_j )- \nu \Delta \cQ u_j + \nabla a_j = f_j
 \end{align}
where
\begin{equation}\label{p2}
 a_j := \ddj a,\qquad \cQ u_j := \ddj \cQ u,
\end{equation}
\begin{equation}\label{p3}
 g_j:= -\ddj(a\div \cQ u) - [\ddj,(u+V)]\cdot\nabla a
\end{equation}
and
\begin{equation}\label{p4}
 f_j:=-\ddj\cQ ( a V_t +a u_t)- \ddj\cQ R_2 - [\ddj, u+V]\cdot\nabla \cQ u.
\end{equation}

We follow an  energy method to bound each term $(a_j,\cQ u_j).$
More precisely, testing \eqref{p1a} and \eqref{p1b} by $a_j$ and $\cQ u_j,$ respectively, yields
\begin{equation}\label{p5}
 \frac{1}{2} \frac{d}{dt} \int a_j^2\, dx + \int a_j\div \cQ u_j\, dx = 
 \frac{1}{2} \int \div u \,  a_j^2 \,dx + \int g_j a_j\, dx
\end{equation}
and 
\begin{multline}\label{p6}
\frac 12 \frac{d}{dt} \int |\cQ u_j|^2 \,dx + \nu \int |\nabla \cQ u_j|^2 \,dx 
- \int a_j \,\div\cQ u_j \,dx\\= \frac12 \int \div u |\cQ u_j|^2\,dx + \int f_j\cdot \cQ u_j\, dx.
\end{multline}

We next want an estimate for $\|\nabla a_j\|_{L_2}^2.$ {}From \eqref{p1a}, we have
\begin{equation}\label{p7}
 \nabla a_{j,t} +  (u+V)\cdot \nabla \nabla a_j + \nabla \div \cQ u_j = 
 \nabla g_j-  \nabla (u+V) \cdot\nabla a_j.
\end{equation}
Testing that equation  by $\nabla a_j$ yields
\begin{multline}\label{p8a}
\frac12 \frac{d}{dt}\int |\nabla a_j|^2 \, dx + \int\bigl((u+V)\cdot\nabla\nabla a_j)\cdot \nabla a_j\,dx
+\int\nabla\div\cQ u_j\cdot\nabla a_j\,dx\\= \int\bigl(\nabla g_j-  \nabla (u+V) \cdot\nabla a_j)\cdot\nabla a_j\,dx.
\end{multline}

To eliminate the highest order term, namely the one with $\nabla\div\cQ u_j,$ it is suitable 
to combine the above equality with a relation involving $\int\cQ u_j\cdot \nabla a_j\,dx.$ Now, 
testing  \eqref{p7}  by $\cQ u_j$ and the  momentum equation by $\nabla a_j,$ we get
\begin{multline}\label{p8}
 \frac{d}{dt}\int\cQ u_j \cdot\nabla a_j\, dx
+\int (u+V)\cdot\nabla(\cQ u_j\cdot\nabla a_j)\,dx 
 -\nu\int \Delta\cQ u_j \cdot\nabla a_{j}\,dx\\ + \int |\nabla a_j|^2 \,dx +\int\nabla\div\cQ u_j\cdot\cQ u_j\,dx =
 \int \bigl( \nabla g_j -\nabla(u+V)\cdot\nabla a_j\bigr)\cdot\cQ u_j\,dx  +\int  f_j\cdot \nabla a_j \,dx.
\end{multline}
Note that by integration by parts, we have
$$
\int (u+V)\cdot\nabla(\cQ u_j\cdot\nabla a_j)\,dx = -\int \cQ u_j\cdot\nabla a_j\:\div u\,dx.
$$

Hence adding  $\nu$ times  \eqref{p8a} to \eqref{p8} and noting that  $\Delta \cQ u_j \equiv \nabla \div \cQ u_j,$
the highest order terms cancel out, and we get 
$$\displaylines{
\frac12 \frac{d}{dt}\int\bigl(\nu|\nabla a_j|^2+2\cQ u_j \cdot\nabla a_j\bigr)\,dx
+ \nu\int(|\nabla a_j|^2-|\Delta\cQ u_j|^2)\,dx\hfill\cr\hfill=
\int \biggl(\frac\nu2|\nabla a_j|^2+\cQ u_j\cdot\nabla a_j\biggr)\div u\,dx
+\nu\int\bigl(\nabla g_j-  \nabla (u+V) \cdot\nabla a_j)\cdot\nabla a_j\,dx\hfill\cr\hfill
+ \int \bigl( \nabla g_j -\nabla(u+V)\cdot\nabla a_j\bigr)\cdot\cQ u_j\,dx  +\int  f_j\cdot \nabla a_j \,dx.}
$$
After multiplying the above equality by $\nu$ and adding up twice \eqref{p5} and \eqref{p6}, we get
\begin{multline}\label{p9}
\frac12 \frac{d}{dt}\cL_j^2 +  \nu\int \Bigl(|\nabla \cQ u_j|^2 + |\nabla a_j|^2\Bigr) dx \\=
 \int \bigl(2g_ja_j+2f_j\cdot\cQ u_j+\nu^2\nabla g_j\cdot\nabla a_j+\nu\nabla g_j\cdot\cQ u_j+\nu f_j\cdot\nabla a_j\bigr)dx\\
 +\frac12\int\cL_j^2\,\div u\,dx-\nu\int \bigl(\nabla(u+V)\cdot\nabla a_j\bigr)\cdot(\nu\nabla a_j+\cQ u_j)\,dx
 \end{multline}
with 
\begin{equation}\label{p10}
 \cL^2_j := \int  \bigl(2a_j^2 + 2|\cQ u_j|^2 + 2\nu \cQ u_j \cdot\nabla a_j + |\nu\nabla a_j|^2)\,dx.
\end{equation}

At this stage, two fundamental  observations are in order. 
First, we obviously have
\begin{equation}\label{p11}
\cL_j\approx\|(\cQ u_j,a_j,\nu\nabla a_j)\|_{L_2}\quad\hbox{ for all }\ j\in\Z
\end{equation}
and, second, 
\begin{equation}\label{p11b}
 \nu\int ( |\nabla \cQ u_j|^2 + |\nabla a_j|^2)\, dx \geq c\min(\nu2^{2j},\nu^{-1})\cL_j^2.
\end{equation}
Therefore  \eqref{p9}, \eqref{p11} and \eqref{p11b}  lead to 
$$\frac12\frac{d}{dt} \cL_j ^2+ c\min(\nu2^{2j},\nu^{-1}) \cL_j^2\leq 
\biggl(\frac12\|\div u\|_{L_\infty}+C\|\nabla(u+V)\|_{L_\infty}\biggr)\cL_j^2
+ C\|[g_j,f_j,\nu\nabla g_j]\|_{L_2}  \cL_j,$$
whence integrating in time, 
\begin{multline}\label{p13}
 \cL_j(t)+ c\min(\nu2^{2j},\nu^{-1}) \int_0^t\cL_j\,d\tau\\\leq \cL_j(0)
+C\int_0^t\|\nabla(u+V)\|_{L_\infty}\cL_j\,d\tau+ C\int_0^t\|[g_j,f_j,\nu\nabla g_j]\|_{L_2}\,d\tau.\end{multline}
Note that we lost the expected parabolic smoothing of $\cQ u$
because $\min(\nu2^{2j},\nu^{-1})=\nu^{-1}$ for large  $j$'s.  
However, it  may be recovered  by  starting  directly from \eqref{p6} and  integrating by parts in the term with $a_j\div\cQ u_j.$
After using Bernstein and H\"older inequalities, we arrive at
$$
\frac 12\frac d{dt}\|\cQ u_j\|_{L_2}^2+c\nu2^{2j}\|\cQ u_j\|_{L_2}^2\leq  \|\nabla a_j\|_{L_2}\|\cQ u_j\|_{L_2}
+\frac12\|\div u\|_{L_\infty}\|\cQ u_j\|_{L_2}^2+\|f_j\|_{L_2}\|\cQ u_j\|_{L_2},
$$
whence, integrating in time,
$$\displaylines{
\|\cQ u_j(t)\|_{L_2}+c\nu2^{2j}\int_0^t\|\cQ u_j\|_{L_2}\,d\tau\leq \|\cQ u_j(0)\|_{L_2}\hfill\cr\hfill+
\int_0^t  \|\nabla a_j\|_{L_2}\,d\tau+\frac12\int_0^t\|\div u\|_{L_\infty}\|\cQ u_j\|_{L_2}\,d\tau
+\int_0^t\|f_j\|_{L_2}\,d\tau.}
$$
Putting together with \eqref{p13}, remembering  \eqref{p11}, multiplying by $2^{j({d/2-1})}$ and eventually summing up over $j\in\Z,$ 
we end up with the following fundamental inequality:
\begin{multline}\label{p14}
\|(a,\nu\nabla a,u)(t)\|_{\dot B^{d/2-1}_{2,1}}+\nu\int_0^t\|\nabla a^\ell,\nabla\cQ u\|_{\dot B^{d/2}_{2,1}}\,d\tau
+\int_0^t\|a^h\|_{\dot B^{d/2}_{2,1}}\,d\tau\\ \lesssim
\|(a,\nu\nabla a,u)(0)\|_{\dot B^{d/2-1}_{2,1}}+ \int_0^t\|\nabla(u+V)\|_{L_\infty}\|(a,\nu\nabla a,\cQ u)\|_{\dot B^{d/2-1}_{2,1}}\,d\tau\\+\int_0^t \sum_{j\in\Z}2^{j(d/2-1)}  \|[g_j,f_j,\nu\nabla g_j]\|_{L_2}\, d\tau,
\end{multline}
where notations  $a^\ell$ and $a^h$  have been defined in \eqref{eq:lhf}.
\medbreak
To complete the proof of estimates for $a$ and $\cQ u,$  we now have to get suitable bounds for the last term
in \eqref{p14}.  
Let us start with the study of $g_j$ defined by \eqref{p3}. First we see that, 
by virtue of Lemma \ref{l:para1}, we have 
\begin{equation}\label{p16}
 \|a\,\div \cQ u\|_{\dot B^{d/2-1}_{2,1}}\lesssim \|\div \cQ u \|_{\dot B^{d/2}_{2,1}}\|a\|_{\dot B^{d/2-1}_{2,1}} 
\end{equation}
and, using rule and, again, Lemma \ref{l:para1},
\begin{equation}\label{p17}
 \nu\|\nabla(a\div\cQ u)\|_{\dot B^{d/2-1}_{2,1}} \lesssim 
 \|\nabla\div \cQ u \|_{\dot B^{d/2-1}_{2,1}}\|\nu a\|_{\dot B^{d/2}_{2,1}}  + 
 \|\div \cQ u \|_{\dot B^{d/2}_{2,1}} \|\nu \nabla a\|_{\dot B^{d/2-1}_{2,1}}.
\end{equation}
The commutator term may be bounded as follows (the first inequality stems from  Lemma 2.100 in \cite{BCD}, 
and the second one may be deduced from that lemma and Leibniz rule):
\begin{align}\label{p18}
\sum_{j\in\Z}2^{j({d/2-1})} \| [\ddj,(u+V)]\nabla a \|_{L_2}&\leq C\|\nabla(u+V) \|_{\dot B^{d/2}_{2,1}}\|a\|_{\dot B^{d/2-1}_{2,1}},\\\label{p19}
\sum_{j\in\Z} 2^{j({d/2-1})}\nu\|\nabla ([\ddj,(u+V)]\nabla a ) \|_{L_2}&\leq C  \|\nabla(u+V)\|_{\dot B^{d/2}_{2,1}}
 \|\nu\nabla a\|_{\dot B^{d/2-1}_{2,1}}.
\end{align}
Hence, putting \eqref{p16} to \eqref{p19} together, we get 
\begin{equation}\label{p19b}
\sum_{j\in\Z} 2^{j(d/2-1)}\|g_j,\nu\nabla g_j\|_{L_2}\leq C  \|\nabla(u+V)\|_{\dot B^{d/2}_{2,1}}
\bigl(\|a\|_{\dot B^{d/2-1}_{2,1}}+\nu\|a\|_{\dot B^{d/2}_{2,1}}\bigr).
\end{equation}

Next,  let us bound  $f_j$ defined in \eqref{p4}. To handle  the terms corresponding to $R_2$ (see 
\eqref{B2}), we use the fact that 
\begin{equation}\label{p19a}
 \|(1\!+\!a)(u\!+\!V)\cdot \nabla (\cP u+V)\|_{\dot B^{d/2-1}_{2,1}}\\\lesssim\bigl(1+\|a\|_{\dot B^{d/2}_{2,1}}\bigr)
 \|(u,V)\|_{\dot B^{d/2-1}_{2,1}} \|(\nabla \cP u,\nabla V)\|_{\dot B^{d/2}_{2,1}},\end{equation}
\begin{equation}\label{p21}
\|a(u+V)\cdot\nabla\cQ u\|_{\dot B^{d/2-1}_{2,1}}\lesssim \|a\|_{\dot B^{d/2}_{2,1}} \|(u,V)\|_{\dot B^{d/2-1}_{2,1}}
\|\nabla\cQ u\|_{\dot B^{d/2-1}_{2,1}},
\end{equation}
and (see (\ref{B1}))
\begin{equation}\label{p21a}
\|k(a)\nabla a\|_{\dot B^{d/2-1}_{2,1}}\lesssim \|a\|_{\dot B^{d/2}_{2,1}}^2.
\end{equation}
As for $g,$ the commutator term of $f$ may be bounded according to Lemma 2.100 in \cite{BCD}:
\begin{equation}
\sum_{j\in\Z}2^{j({d/2-1})}\|[\ddj, u+V]\nabla \cQ u\|_{L_2} \leq 
 C\|\nabla(u+V) \|_{\dot B^{d/2}_{2,1}}\|\cQ u\|_{\dot B^{d/2-1}_{2,1}}.
\end{equation}
Finally, for the   terms with the time derivative, we have 
\begin{equation}\label{p22}
 \|aV_t\|_{\dot B^{d/2-1}_{2,1}} +\|a u_t\|_{\dot B^{d/2-1}_{2,1}} \leq C
 \|V_t,u_t\|_{\dot B^{d/2-1}_{2,1}} \|a\|_{\dot B^{d/2}_{2,1}}.
\end{equation}
From \eqref{p19a} to \eqref{p22}, we conclude that
\begin{multline}\label{fj}
\sum_{j\in\Z}2^{j(d/2-1)}\|f_j\|_{L_2}\leq C\Bigl(\|(u,V)\|_{\dot B^{d/2-1}_{2,1}}\|\nabla\cP u,\nabla V\|_{\dot B^{d/2}_{2,1}}\\
+\|a\|_{\dot B^{d/2}_{2,1}}\bigl(\|(u,V)\|_{\dot B^{d/2-1}_{2,1}}\|\nabla u,\nabla V\|_{\dot B^{d/2}_{2,1}}
+  \|V_t,u_t\|_{\dot B^{d/2-1}_{2,1}}+\|a\|_{\dot B^{d/2}_{2,1}}\bigr)\Bigr)\cdotp
\end{multline}
Putting \eqref{p19b} and \eqref{fj}  together in \eqref{p14}  gives us for all $0\leq T<T_\star,$
\begin{multline}\label{f1}
 \| \cQ u, a, \nu\nabla a\|_{L_\infty(0,T;\dot B^{d/2-1}_{2,1})} +
 \| \cQ u_t, \nu \nabla^2 \cQ u,\nu\nabla^2 a^\ell,
 \nabla a^h\|_{L_1(0,T;\dot B^{d/2-1}_{2,1})}\\ \lesssim   \|(\cQ u, a, \nu\nabla a)(0)\|_{\dot B^{d/2-1}_{2,1}}
 +  \int_0^T\|\nabla u,\nabla V \|_{\dot B^{d/2}_{2,1}}\|a, \nu\nabla a,\cQ u\|_{\dot B^{d/2-1}_{2,1}}\,d\tau\\
  +\bigl(1+\|a\|_{L_\infty(0,T;\dot B^{d/2}_{2,1})}\bigr)\int_0^T
 \|(\cP u,\cQ u,V)\|_{\dot B^{d/2-1}_{2,1}} \|(\nabla \cP u,\nabla V)\|_{\dot B^{d/2}_{2,1}}\,d\tau  \\
  +\|a\|_{L_\infty(0,T;\dot B^{d/2}_{2,1})}  \int_0^T\|(\cP u,\cQ u,V)\|_{\dot B^{d/2-1}_{2,1}} \|\nabla \cQ u\|_{\dot B^{d/2}_{2,1}}\,d\tau\\
 + \|V_t,u_t\|_{L_1(0,T;\dot B^{d/2-1}_{2,1})} \|a\|_{L_\infty(0,T;\dot B^{d/2}_{2,1})}  +  \|a\|_{L_2(0,T;\dot B^{d/2}_{2,1})}^2.
\end{multline}
Hence,    using obvious interpolation to bound the last term, and also  the fact that \eqref{eq:smalla} implies that 
\begin{equation}\label{eq:smallabis}
\|a\|_{L_\infty(0,T;\dot B^{d/2}_{2,1})}\ll1,
\end{equation}
we get
\begin{multline}\label{f2}
 \| \cQ u, a, \nu\nabla a\|_{L_\infty(0,T;\dot B^{d/2-1}_{2,1})} +
 \| \cQ u_t, \nu \nabla^2 \cQ u,\nu\nabla^2 a^\ell,
 \nabla a^h\|_{L_1(0,T;\dot B^{d/2-1}_{2,1})}\\ \lesssim   \|(\cQ u, a, \nu\nabla a)(0)\|_{\dot B^{d/2-1}_{2,1}}
 +  \int_0^t\|\nabla\cP u,\nabla\cQ u,\nabla V \|_{\dot B^{d/2}_{2,1}}\|a, \nu\nabla a,\cQ u\|_{\dot B^{d/2-1}_{2,1}}\,d\tau\\
  +\|(\cP u,V)\|_{L_\infty(0,T;\dot B^{d/2-1}_{2,1})} \|(\nabla \cP u,\nabla V)\|_{L_1(0,T;\dot B^{d/2}_{2,1})} \\ 
  +\|a\|_{L_\infty(0,T;\dot B^{d/2}_{2,1})}  \|(\cP u,V)\|_{L_\infty(0,T;\dot B^{d/2-1}_{2,1})} \|\nabla \cQ u\|_{L_1(0,T;\dot B^{d/2}_{2,1})}\\
 + \|V_t,\cP u_t\|_{L_1(0,T;\dot B^{d/2-1}_{2,1})} \|a\|_{L_\infty(0,T;\dot B^{d/2}_{2,1})}  
 +  \nu^{-1}\bigl(\|a^\ell\|_{L_\infty(0,T;\dot B^{d/2-1}_{2,1})} \|\nu\nabla a^\ell\|_{L_1(0,T;\dot B^{d/2}_{2,1})}\\
  +  \|\nu a^h\|_{L_\infty(0,T;\dot B^{d/2}_{2,1})} \|a^h\|_{L_1(0,T;\dot B^{d/2}_{2,1})}\bigr).
\end{multline}
Hence, from  Gronwall lemma,
\begin{multline}\label{f4}
 X_d(T)+Y_d(T)\leq Ce^{C\int_0^t  \|\nabla\cP u,\nabla\cQ u,\nabla V \|_{\dot B^{d/2}_{2,1}}\,d\tau}\biggl(
 X_d(0)+(V_d(T)+Z_d(T))(V_d(T)+W_d(T))\\+\nu^{-2}X_d(T)Y_d(T)(V_d(T)+Z_d(T))
 +\nu^{-1}(V_d(T)+Y_d(T)+W_d(T))X_d(T)\biggr)\cdotp \end{multline}
 

\subsubsection*{Step 3. Global-in-time closure of the estimates}

 Assuming that 
 \begin{equation}\label{eq:smallD1}
 \nu^{-1}D \ll1,
 \end{equation}
 Inequality \eqref{f4} and hypotheses \eqref{f3a} and \eqref{f3} imply that
$$\displaylines{
 X_d(T)+Y_d(T)\leq Ce^{C(M+\nu^{-1}D+\delta)} \Bigl(X_d(0)+(M+\delta)^2
\hfill\cr\hfill+\nu^{-2}D(M+\delta)X_d(T)+\nu^{-1}(D+\delta+M)X_d(T)\Bigr)}
$$
while \eqref{eq:step1final} yields
$$\displaylines{
 Z_d(T)+W_d(T) \leq CDe^{C(M+\nu^{-1}D+\delta)} \bigl( \nu^{-1}(M+\delta+D)
 \hfill\cr\hfill+\nu^{-1/2} M +\nu^{-1}(M+D+\delta)W_d(T)+\nu^{-1}M^2\bigr) \cdotp}
$$
Hence, assuming in addition that
 \begin{equation}\label{eq:smallD2}
\nu^{-1}D\leq M\quad\hbox{and}\quad \delta\leq \max\{M,1\},
 \end{equation}
we get (enlarging $C$ as the case may be)
$$\displaylines{
 X_d(T)+Y_d(T)\leq Ce^{CM} \Bigl(X_d(0)+M^2+1
+\nu^{-1}(M+D)X_d(T)\Bigr),\cr
 Z_d(T)+W_d(T) \leq CDe^{CM}\Bigl(\nu^{-1}D+\nu^{-1/2}M+\nu^{-1}M^2+\nu^{-1}(M+D)W_d\Bigr)\cdotp}
 $$
Therefore, if we make the assumption that
 \begin{equation}\label{eq:smallD2b}
 D(D+M+1)e^{CM} \ll\nu,
  \end{equation}
then we end up with 
\begin{equation}\label{f5b}
 X_d(T)+Y_d(T)\leq Ce^{CM} \bigl(X_d(0) +M^2+1\bigr)
\end{equation}
and
\begin{equation}\label{f8}
 Z_d(T)+W_d(T) \leq  CDe^{CM} \bigl(\nu^{-1/2} M+\nu^{-1}(D+M^2+1)\bigr) \cdotp
\end{equation}
So it is natural to take first
\begin{equation}\label{f9}
D:= Ce^{CM}(X_d(0)+M^2+1)
\end{equation}
 and then to set 
\begin{equation}\label{f10}
\delta:=  Ce^{CM}\bigl(X_d(0)+M^2\bigr) \Bigl( \nu^{-1/2}M +\nu^{-1}(X_d(0)+ M^2+1\bigr)\Bigr)\cdotp \end{equation}
Now, assuming that for a suitably large (universal) constant $C$ we have
\begin{equation}\label{eq:nu}
Ce^{CM}(X_d(0)+1+M^2)\leq\sqrt{\nu},
\end{equation} 
we see that Conditions   \eqref{eq:smallD1} and  \eqref{eq:smallD2b}  are fulfilled
(and thus also   \eqref{eq:smallD2} as it is weaker). 
\medbreak
Let us recap:  if $\nu$ and the compressible part of the data 
fulfill  \eqref{eq:nu}  then defining $D$ and $\delta$ according to \eqref{f9} and \eqref{f10}
ensures that \eqref{f3} is fulfilled for all $T<T_\star.$ Then,  combining with the 
continuation criterion recalled in \eqref{eq:ex}, one can conclude that $T_\star=+\infty$
and that \eqref{f3} is satisfied for all time. This completes the proof of Theorems \ref{th:2D} and \ref{th:dD}. 
Finally,   in the 2D case, Theorem \ref{Th:NS} enables us to  take
\begin{equation}\label{eq:M}
M= C\|\cP v_0\|_{\dot B^0_{2,1}}\exp\Bigl(\frac{C}{\mu^4}\|\cP v_0\|^4_{L_2}\Bigr),
\end{equation}
which provides us with an explicit  lower bound for $\nu$ depending only  on the initial data, through \eqref{eq:nu}.


\section{Appendix}

We here consider the global well-posedness issue of the incompressible two-dimensional Navier-Stokes system
in the critical Besov spaces setting. 
 Although essentially the same result has been proved  in \cite{D3} (see Theorem 6.3 therein), 
 we here provide another (different and more elementary) proof for the reader convenience. 
\begin{thm}\label{Th:NS}
Let $V_0$ be in $\dot B^{0}_{2,1}(\R^2)$ with  $\div V_0=0$. Then there exists a unique solution to \eqref{NS}
such that
\begin{equation}\label{V-ene1}
V \in L_\infty(\R_+;\dot B^0_{2,1}(\R^2))\quad\hbox{and}\quad V_t, \nabla^2 V\in L_1(\R_+;\dot B^0_{2,1}(\R^2)).
\end{equation}
Furthermore, the following inequality is satisfied for all $T\geq0$:
\begin{equation}\label{V10}
 \|V\|_{L_\infty(0,T;\dot B^0_{2,1})} + \|V_t,\nabla^2V\|_{L_1(0,T;\dot B^{0}_{2,1})} 
 \leq C\|V_0\|_{\dot B^0_{2,1}}\exp\Bigl(\frac{C}{\mu^4}\|V_0\|^4_{L_2}\Bigr)
\end{equation}
for some universal constant $C.$
\end{thm}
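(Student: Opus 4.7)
The plan is to close a global a priori bound for $V$ in $\dot B^0_{2,1}(\R^2)$ by coupling the classical $L_2$ energy identity with a Littlewood--Paley energy estimate on \eqref{NS}, using Bony's decomposition of the nonlinearity to \emph{borrow} integrability from the energy. Local well-posedness in $\dot B^0_{2,1}(\R^2)$ is classical in the critical Chemin--Danchin framework, so the heart of the matter is to produce the a priori bound \eqref{V10}; global existence then follows from the standard continuation criterion. Since $\dot B^0_{2,1}(\R^2)\hookrightarrow L_2(\R^2)$, the energy identity $\|V(t)\|_{L_2}^2+2\mu\int_0^t\|\nabla V\|_{L_2}^2\,d\tau=\|V_0\|_{L_2}^2$ is available from the outset; in particular $\int_0^\infty\|\nabla V\|_{L_2}^2\,d\tau\leq\|V_0\|_{L_2}^2/(2\mu)$.

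The first concrete step is to apply $\ddj$ to \eqref{NS}, take an $L_2$ inner product with $\ddj V$, use $\div V=0$ to cancel the convective term (up to a commutator that I will absorb into $\|\ddj(V\cdot\nabla V)\|_{L_2}$), and invoke Bernstein to extract $2^{2j}$. Summing over $j$ with weight $2^{0\cdot j}=1$ and integrating in time gives $\|V(t)\|_{\dot B^0_{2,1}}+c\mu\int_0^t\|V\|_{\dot B^2_{2,1}}\,d\tau\leq\|V_0\|_{\dot B^0_{2,1}}+C\int_0^t\|V\cdot\nabla V\|_{\dot B^0_{2,1}}\,d\tau$, so the whole problem reduces to a sharp bound on the nonlinearity.

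The key step is to decompose $V\cdot\nabla V=T_V\nabla V+T_{\nabla V}V+R(V,\nabla V)$ via Bony and, for the low--high piece, to refuse the naive bound $\|T_V\nabla V\|_{\dot B^0_{2,1}}\lesssim\|V\|_{L_\infty}\|V\|_{\dot B^1_{2,1}}$, which is unavailable in $2$D. Instead, I will use Gagliardo--Nirenberg $\|\dot S_{q-1}V\|_{L_4}\lesssim\|V\|_{L_2}^{1/2}\|\nabla V\|_{L_2}^{1/2}$ together with the Bernstein bound $\|\dot\Delta_q\nabla V\|_{L_4}\lesssim 2^{q/2}\|\dot\Delta_q\nabla V\|_{L_2}$ to obtain $\|T_V\nabla V\|_{\dot B^0_{2,1}}\lesssim\|V\|_{L_2}^{1/2}\|\nabla V\|_{L_2}^{1/2}\|V\|_{\dot B^{3/2}_{2,1}}$, then interpolate via $\|V\|_{\dot B^{3/2}_{2,1}}\leq\|V\|_{\dot B^0_{2,1}}^{1/4}\|V\|_{\dot B^2_{2,1}}^{3/4}$ and apply Young to peel off an absorbable $c\mu\|V\|_{\dot B^2_{2,1}}/8$ plus a Gronwall coefficient of order $\mu^{-3}\|V\|_{L_2}^2\|\nabla V\|_{L_2}^2$ in front of $\|V\|_{\dot B^0_{2,1}}$. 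The symmetric paraproduct yields $\|T_{\nabla V}V\|_{\dot B^0_{2,1}}\lesssim\|\nabla V\|_{L_2}\|V\|_{\dot B^1_{2,1}}$, which by the interpolation $\|V\|_{\dot B^1_{2,1}}\leq\|V\|_{\dot B^0_{2,1}}^{1/2}\|V\|_{\dot B^2_{2,1}}^{1/2}$ and Young is again absorbable modulo a coefficient $\mu^{-1}\|\nabla V\|_{L_2}^2$. The remainder is harmless: Bernstein's $\|\ddj(fg)\|_{L_2}\lesssim 2^j\|fg\|_{L_1}$ gives directly $\|R(V,\nabla V)\|_{\dot B^0_{2,1}}\lesssim\|\nabla V\|_{L_2}^2$, a pure source whose time integral is controlled by $\|V_0\|_{L_2}^2/\mu$ via the energy identity.

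Inserting everything, $f(t):=\|V(t)\|_{\dot B^0_{2,1}}$ satisfies $f(t)+\tfrac{c\mu}{2}\int_0^t\|V\|_{\dot B^2_{2,1}}\,d\tau\lesssim\|V_0\|_{\dot B^0_{2,1}}+\|V_0\|_{L_2}^2/\mu+\int_0^t\bigl(C\mu^{-3}\|V\|_{L_2}^2\|\nabla V\|_{L_2}^2+C\mu^{-1}\|\nabla V\|_{L_2}^2\bigr)f(\tau)\,d\tau$, and Gronwall together with the energy identity bounds the exponent by $C\|V_0\|_{L_2}^4/\mu^4+C\|V_0\|_{L_2}^2/\mu^2$, the second piece being dominated by the first up to a multiplicative constant (since $x\leq 1+x^2$). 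The additive $\|V_0\|_{L_2}^2/\mu$ can then be absorbed into $\|V_0\|_{\dot B^0_{2,1}}$ times the exponential using the embedding $\dot B^0_{2,1}\hookrightarrow L_2$, which produces \eqref{V10}. The $L_1(\dot B^0_{2,1})$ bound on $V_t$ follows at once from \eqref{NS} via $V_t=\mu\Delta V-\cP(V\cdot\nabla V)$, and uniqueness is a routine difference argument in $L_\infty(L_2)\cap L_2(\dot H^1)$. The main obstacle is precisely the low--high paraproduct $T_V\nabla V$: because $V$ need not lie in $L_\infty$ in $2$D, one must arrange the Gagliardo--Nirenberg split so that the resulting Gronwall coefficient is exactly integrable against the energy identity and reproduces the classical $H^1$ exponent $C\|V_0\|_{L_2}^4/\mu^4$ rather than something worse.
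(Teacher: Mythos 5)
Your proof is correct, and it takes a genuinely different route to the same bound. The paper's argument packages the $L_2$ energy identity into a Besov-friendly form up front: real interpolation of $V\in L_\infty(L_2)\cap L_2(\dot H^1)$ gives the quantitative estimate $\mu^{1/4}\|V\|_{L_4(\R_+;\dot B^{1/2}_{2,1})}\lesssim\|V_0\|_{L_2}$. The nonlinearity is then handled in one shot via the product law $\|V\cdot\nabla V\|_{\dot B^0_{2,1}}\lesssim\|V\|_{\dot B^{1/2}_{2,1}}\|\nabla V\|_{\dot B^{1/2}_{2,1}}$ (Lemma~\ref{l:para1}), followed by the interpolation $\|\nabla V\|_{\dot B^{1/2}_{2,1}}\lesssim\|V\|_{\dot B^0_{2,1}}^{1/4}\|\nabla^2V\|_{\dot B^0_{2,1}}^{3/4}$, Young's inequality to absorb $\eps\mu\|\nabla^2V\|_{L_1(\dot B^0_{2,1})}$, and Gronwall against the $L_4(\dot B^{1/2}_{2,1})$ norm. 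You instead keep the energy identity raw and go inside Bony's decomposition: the Gagliardo--Nirenberg (Ladyzhenskaya) inequality $\|\dot S_{q-1}V\|_{L_4}\lesssim\|V\|_{L_2}^{1/2}\|\nabla V\|_{L_2}^{1/2}$ controls the low--high paraproduct $T_V\nabla V$, the cruder Bernstein bound $\|\dot S_{q-1}\nabla V\|_{L_\infty}\lesssim 2^q\|\nabla V\|_{L_2}$ controls $T_{\nabla V}V$, and $\|\ddj u\|_{L_2}\lesssim 2^j\|\ddj u\|_{L_1}$ (with Cauchy--Schwarz in the dyadic index) turns the remainder into a pure source $\lesssim\|\nabla V\|_{L_2}^2$. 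The two arguments land on essentially the same Gronwall exponent $\lesssim\mu^{-4}\|V_0\|_{L_2}^4$: the paper reads it off from $\mu^{-3}\int\|V\|_{\dot B^{1/2}_{2,1}}^4\,dt$, you read it off from $\mu^{-3}\int\|V\|_{L_2}^2\|\nabla V\|_{L_2}^2\,dt$. The paper's route is shorter because the $L_4(\dot B^{1/2}_{2,1})$ interpolation bound avoids carrying three separate paraproduct pieces and the extra bookkeeping (the secondary exponent $\mu^{-2}\|V_0\|_{L_2}^2$ and the additive $\mu^{-1}\|V_0\|_{L_2}^2$ source term, both of which you correctly absorb). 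Your route is more elementary in that it uses only the raw energy identity, Bernstein, and $2$D Ladyzhenskaya, without invoking real interpolation between Banach couples; it also makes transparent exactly where the two-dimensional structure enters (the $L_4$-Ladyzhenskaya step and the $2^j\|\cdot\|_{L_1}\to\|\cdot\|_{L_2}$ Bernstein step for the remainder).
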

\begin{proof} 
First recall that the space $\dot B^0_{2,1}(\R^2)$ for initial 
velocity embeds in $L_2(\R^2).$ Hence we have  $V_0\in L_2(\R^2)$  and the pioneering works by J. Leray in \cite{Leray}
delivers us global in time weak solutions satisfying the energy estimate
\begin{equation}\label{V1}
 \sup_{t\in\R_+} \|V(t)\|_{L_2(\R^2)}^2 + 2\mu\int_0^{\infty} \|\nabla V\|_{L_2(\R^2)}^2\, dx = \|V_0\|_{L_2(\R^2)}^2.
\end{equation}
Then  the classical  result of Olga Alexandrovna  \cite{OL} provides 
a unique  global in time  solution  $V\in L_\infty(\R_+;L_2(\R^2))\cap L_2(\R_+;\dot H^1(\R^2))$. Here we want to improve the regularity to 
the class defined by (\ref{V-ene1}).

Now, real  interpolation applied to (\ref{V1}) gives 
\begin{equation*}
V\in \left( L_\infty(\R_+;L_2(\R^2)), L_2(\R_+;\dot H^1(\R^2))\right)_{1/2,1},
\end{equation*}
which implies that
$V \in L_4(\R_+;\dot B^{1/2}_{2,1}(\R^2)).$
\medbreak
To take advantage of that  information, we look at the equation satisfied by $V$
 as a nonlinear modification of the Stokes system, namely
\begin{equation}\label{V-Stokes}
 \begin{array}{l}
V_t - \mu\Delta V + \nabla\Pi = -V \cdot \nabla V, \\
\div V=0, \\
V|_{t=0}=V_0.
 \end{array}
\end{equation}

Using the endpoint maximal regularity estimates of  the Stokes system in homogeneous Besov 
spaces (which, in $\R^2,$  coincide with those  for the heat equation), we may write 
\begin{equation}\label{V5}
 \|V\|_{L_\infty(0,T;\dot B^0_{2,1})} + \|V_t,\mu\nabla^2V\|_{L_1(0,T;\dot B^0_{2,1})} 
\leq C(\|V\cdot \nabla V\|_{L_1(0,T;\dot B^0_{2,1})} + \|V_0\|_{\dot B^0_{2,1}}).
\end{equation}

Next we have to bound  $\|V\cdot \nabla V\|_{L_1(0,T;\dot B^0_{2,1})}$. From the energy balance \eqref{V1}
and the interpolation property pointed out above, we know that
\begin{equation}\label{eq:interpo}
\mu^{1/4}\|V\|_{L_4(\R_+;\dot B^{1/2}_{2,1})} \leq C \|V_0\|_{L_2}.\end{equation}
Furthermore, product laws in Besov spaces (Lemma \ref{l:para1}) ensure that  
\begin{equation}\label{V7}
 \|V \cdot \nabla V \|_{\dot B^0_{2,1}} \leq C \|V\|_{\dot B^{1/2}_{2,1}} 
\|\nabla V \|_{\dot B^{1/2}_{2,1}}.
\end{equation}
Hence integrating \eqref{V7} on the time interval $[0,T],$ and using the following interpolation  inequality:
$$
\|Z\|_{\dot B^{1/2}_{2,1}}\leq C\|Z\|_{\dot B^{-1}_{2,1}}^{1/4}
\|\nabla Z\|_{\dot B^{0}_{2,1}}^{3/4},
$$
together with Young inequality, we may write for all $\eps>0,$
$$
\begin{aligned}
 \|V \cdot \nabla V\|_{L_1(0,T;\dot B^0_{2,1})} &\leq C\int_{0}^T \|V\|_{\dot B^{1/2}_{2,1}} \|\nabla V\|_{\dot B^{1/2}_{2,1}}\, dt \\
 &\leq C\int_{0}^T \|V\|_{\dot B^{1/2}_{2,1}} \, \|\nabla V\|^{1/4}_{\dot B^{-1}_{2,1}}\| \nabla^2 V\|^{3/4}_{\dot B^{0}_{2,1}}\,dt\\
  &\leq  \frac{C}{\eps^3\mu^3} \int_0^T \|V\|^4_{\dot B^{1/2}_{2,1}} \, \|V\|_{\dot B^{0}_{2,1}}\,dt + \eps\mu \|\nabla^2V\|_{L_1(0,T;\dot B^{0}_{2,1})}.\end{aligned}
  $$
Hence, reverting to \eqref{V5} and taking $\eps$ small enough, we find that
\begin{equation}\label{V8}
 \|V\|_{L_\infty(0,T;\dot B^0_{2,1})} + \|V_t,\mu\nabla^2V\|_{L_1(0,T;\dot B^{0}_{2,1})} \leq 
C \bigg( \frac{1}{\mu^3}\int_0^T \|V\|^4_{\dot B^{1/2}_{2,1}} \, \|V\|_{\dot B^{0}_{2,1}}\,dt + \|V_0\|_{\dot B^0_{2,1}}\bigg).
\end{equation}
In view of the Gronwall inequality,  this gives
\begin{equation}\label{V9}
 \|V\|_{L_\infty(0,T;\dot B^0_{2,1}(\R^2)} \leq C\|V_0\|_{\dot B^0_{2,1}(\R^2)}\exp\biggl( \frac C{\mu^3}\int_{0}^T \|V\|^4_{\dot B^{1/2}_{2,1}} dt\biggr)\cdotp
\end{equation}
Remembering \eqref{eq:interpo}, one can conclude to  \eqref{V10}.
\medbreak
For the sake of completeness, we have to prove that $\nabla^2V\in L_1(\R_+;\dot B^0_{2,1})$ as it has been assumed
implicitly in the above computations. This may be obtained by bootstrap 
from the property that $V\in L_4(\R_+;\dot B^{\frac12}_{2,1}).$  Indeed, because  $V\cdot\nabla V=\div(V\otimes V),$
 Lemma \ref{l:para1} gives $V\cdot\nabla V\in  L_2(\R_+;\dot B^{-1}_{2,1}),$ and thus
 $V\in L_2(\R_+;\dot B^1_{2,1})$ through \eqref{V-Stokes}, thanks to  maximal regularity results for the Stokes system. 
 Knowing that  $V\in L_2(\R_+;\dot B^1_{2,1}),$ Lemma \ref{l:para1} now gives us 
  $V\cdot\nabla V\in  L_1(\R_+;\dot B^{0}_{2,1}),$ and thus  $(V_t,\nabla^2V)\in  L_1(\R_+;\dot B^{0}_{2,1}).$
  \end{proof}
\bigbreak
\noindent{\bf Acknowledgments.}
The first author (R.D.)  is partly supported by ANR-15-CE40-0011.
The second author (P.B.M.) has been partly supported by National  Science  Centre  grant 2014/14/M/ST1/00108 (Harmonia).
\bigskip

\end{document}